\def\today{\ifcase\month\or
  January\or February\or March\or April\or May\or June\or
  July\or August\or September\or October\or November\or December\fi
  \space\number\day, \number\year}
 \newtheorem{theorem}{Theorem}
 \newtheorem{conjecture}[theorem]{Conjecture}
 \newtheorem{lemma}[theorem]{Lemma}
 \newtheorem{corollary}[theorem]{Corollary}
 \theoremstyle{definition}
 \newtheorem{definition}[theorem]{Definition}
 \theoremstyle{remark}
\renewcommand{\S}{\mathbb{S}}
 \newcommand{\C}{\mathbb{C}}
 \newcommand{\R}{\mathbb{R}}
 \newcommand{\N}{\mathbb{N}}
 \newcommand{\Z}{\mathbb{Z}}
 \newcommand{\du}{\text{\rm d}u}
 \newcommand{\dx}{\text{\rm d}x}
  \newcommand{\dr}{\text{\rm d}r}
    \renewcommand{\d}{\text{\rm d}}
\newcommand{\ov}{\overline}
\begin{document}

\title[]{A sharp trilinear inequality related to \\ Fourier restriction on the circle}
\author[Carneiro, Foschi, Oliveira e Silva, Thiele]{Emanuel Carneiro, Damiano Foschi, Diogo Oliveira e Silva and Christoph Thiele}
\date{\today}
\subjclass[2010]{42B10}
\keywords{Circle, Fourier restriction, sharp inequalities, extremizers, convolution of surface measures, Bessel functions}
\address{IMPA - Instituto Nacional de Matem\'{a}tica Pura e Aplicada, Estrada Dona Castorina 110, Rio de Janeiro, RJ 22460-320, Brazil}
\email{carneiro@impa.br}
\address{Dipartimento di Matematica e Informatica, Universit\`{a} di Ferrara, via Macchiavelli 35, 44121 Ferrara, Italy}
\email{damiano.foschi@unife.it}
\address{Hausdorff Center for Mathematics, Universit\"{a}t Bonn, Endenicher Allee 60, 53115 Bonn, Germany}
\email{dosilva@math.uni-bonn.de}
\email{thiele@math.uni-bonn.de}

\allowdisplaybreaks
\numberwithin{equation}{section}

\maketitle

%%%%%%%%  Abstract  %%%%%%%%%%%%%
\begin{abstract} In this paper we prove a sharp trilinear inequality which is motivated by a program to obtain the sharp form of the $L^2 - L^6$ Tomas-Stein adjoint restriction inequality on the circle. 
Our method uses intricate estimates for integrals of sixfold products of Bessel functions developed in a companion paper \cite{OST}. We also establish that  constants are local extremizers of the
Tomas-Stein adjoint restriction inequality as well as of another inequality appearing in the program.
\end{abstract}

%%%%%%%%  Introduction  %%%%%%%%%%%%%
\section{Introduction}
Let $(\S^1,\sigma)$ denote the unit circle in the plane equipped with its arc length measure. We are interested in the sharp version of the endpoint Tomas-Stein adjoint restriction inequality \cite{T, S} on the circle:
\begin{equation}\label{Intro_TomasStein}
\|\widehat{f\sigma}\|_{L^6(\R^2)}\leq {\bf C_{{\rm opt}}} \,\|f\|_{L^2(\S^1)},
\end{equation}
where the Fourier transform of the measure $f\sigma$ is given by 
\begin{equation*}
\widehat{f\sigma}(x)=\int_{\S^1} f(\omega)\,e^{-i x\cdot\omega}\, \d\sigma_\omega,\;\;\;(x\in\R^2),
\end{equation*}
and ${\bf C_{{\rm opt}}}$ denotes the optimal constant,
$${\bf C_{{\rm opt}}}:=\sup_{0\neq f\in L^2(\S^1)} \Phi(f);\ \ \Phi(f) := \|\widehat{f\sigma}\|_{L^6(\R^2)}\|f\|_{L^2(\S^1)}^{-1}.$$
The existence of global extremizers of $\Phi$ was recently established by Shao \cite{Sh3}.
Our first result establishes that the constant function ${\bf 1}$ is a local extremizer of $\Phi$. 
\begin{theorem}\label{Thm1}
There exists $\delta>0$ such that, whenever $\|f-{\bf 1}\|_{L^2(\mathbb{S}^1)}<\delta$, we have
$\Phi(f)\leq\Phi({\bf 1}).$
\end{theorem}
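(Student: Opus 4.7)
The strategy is a second-variation analysis at $f = \mathbf{1}$, with rotational invariance of $\sigma$ used to diagonalise the resulting quadratic form in the Fourier basis $\{e^{in\theta}\}_{n\in\Z}$ of $L^2(\S^1)$. The functional $\Phi$ is invariant under the symmetry group generated by complex scaling $f\mapsto cf$ and modulation $f(\omega)\mapsto e^{i\alpha\cdot\omega}f(\omega)$ (the latter translates $\widehat{f\sigma}$ by $\alpha$ and preserves $\|f\|_{L^2}$). At $f = \mathbf{1}$ the tangent to this symmetry orbit is the four-real-dimensional subspace spanned by $\mathbf{1}, i\mathbf{1}, i\cos\theta, i\sin\theta$, so by modding it out it suffices to prove $\Phi(\mathbf{1}+h) \leq \Phi(\mathbf{1})$ for small $h$ orthogonal to this subspace.

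Writing $\widehat{(1+h)\sigma} = G + H$ with $G(x) = 2\pi J_0(|x|)$ and $H(x) = 2\pi \sum_n a_n(-i)^n J_n(|x|)\,e^{in\arg x}$ (for $h = \sum a_n e^{in\theta}$), the pointwise expansion
\begin{equation*}
|G+H|^6 = G^6 + 6 G^5\,\re H + 3 G^4 |H|^2 + 12 G^4 (\re H)^2 + O(|H|^3)
\end{equation*}
gives a clean second-order expression. Integrating in the angular variable $\phi = \arg x$ kills all cross-mode contributions, so the second variation of $\Phi(1+h)^6/\Phi(\mathbf{1})^6$ depends only on the radial integrals
\begin{equation*}
B_n := \int_0^\infty J_0(r)^4 J_n(r)^2\, r\,\d r, \qquad C := \int_0^\infty J_0(r)^6 \, r\,\d r,
\end{equation*}
and after combining with the analogous expansion of $\|1+h\|_{L^2}^6$ the quadratic form splits into mode-pair blocks
\begin{equation*}
\psi_2^{(n,-n)}(h) = \pr{\tfrac{9 B_n}{C} - 3}\pr{|a_n|^2 + |a_{-n}|^2} + \tfrac{12(-1)^n B_n}{C}\,\re(a_n a_{-n}),
\end{equation*}
a real quadratic form on $\R^4$ with eigenvalues $15 B_n/C - 3$ and $3 B_n/C - 3$, each of multiplicity two.

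Local extremality thus reduces to the sharp inequality $B_n \leq C/5$ for all $n \geq 2$ (with enough strictness to absorb the cubic remainder via smallness of $\|h\|_{L^2}$). As a consistency check, the case $n = 1$ is a genuine \emph{equality}: from $(rJ_1)' = rJ_0$ one integration by parts gives $C = 5 B_1$ exactly, and the resulting zero eigenvalue sits on the eigenvector $a_{-1} = -\ov{a_1}$, which is precisely the modulation direction required to be null. For large $|n|$, the concentration of $J_n(r)$ near $r \sim n$ combined with the decay $J_0(r)^4 = O(r^{-2})$ forces $B_n\to 0$, so only a bounded range of intermediate $n$ needs sharp treatment.

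The main obstacle is therefore the sharp bound $B_n \leq C/5$ in that mid range, which is exactly the kind of sixfold Bessel integral estimate proved in the companion paper \cite{OST}. Once this is imported as a black box, the quadratic form satisfies $\psi_2(h) \leq -c\|h\|_{L^2}^2$ on the orthogonal complement of the symmetry subspace, and a routine continuity argument upgrades this to the desired local inequality $\Phi(\mathbf{1}+h) \leq \Phi(\mathbf{1})$ in an $L^2$-neighbourhood of $\mathbf{1}$.
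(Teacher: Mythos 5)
Your argument is correct and reaches the same crux as the paper, namely the inequality $\alpha_n := \int_0^\infty J_n^2 J_0^4\, r\,\dr < \tfrac15 \alpha_0$ for $n\ge2$, with equality at $n=1$. Your computation of the second variation is right: expanding $\|\widehat{(1+h)\sigma}\|_6^6/\|1+h\|_2^6$ and integrating out the angular variable does pair the Fourier modes $(n,-n)$ into the quadratic form you write, with eigenvalues $15B_n/C-3$ and $3B_n/C-3$, and the null directions at $n=1$ are exactly the infinitesimal modulations $a_{-1}=-\overline{a_1}$.

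Where you diverge from the paper is in the symmetry reduction, and the difference is worth noting. The paper does not mod out by the modulation group at all. Instead it uses the sharp pointwise chain $\Phi(f)\le\Phi(|f|)\le\Phi(|f|_\sharp)$ together with the $L^2$-contraction property $\||f|_\sharp-\mathbf 1\|_2\le\|f-\mathbf 1\|_2$, and then scaling, to reduce directly to $f=\mathbf 1+\varepsilon g$ with $g$ real, mean-zero, and \emph{antipodally symmetric}. This has two advantages: it avoids the implicit-function-theorem step you are implicitly invoking when you say a ``routine continuity argument'' upgrades negative definiteness modulo the symmetry orbit to genuine local maximality; and, because antipodal symmetry forces $\widehat g(n)=0$ for odd $n$, it eliminates the entire $n=1$ block (and all odd modes) from the quadratic form, so the degenerate eigenvalue never appears. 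Your route must handle all $n\ge1$, carry the $n=1$ degeneracy via the modulation symmetry, and also verify $\alpha_n<\alpha_0/5$ for odd $n\ge3$, whereas the paper only needs even $n$. Fortunately the companion paper's asymptotics for $\alpha_n$ are stated without parity restriction, and the tabulated odd values also satisfy the bound, so your argument closes. Both approaches ultimately import the Bessel estimate from \cite{OST}; yours is the more generic Morse--Bott-type second-variation argument, while the paper's rearrangement reduction is shorter and sidesteps the degeneracy entirely.
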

It is known that the constant function ${\bf 1}$ is a critical point of $\Phi$. Indeed, by rotational symmetry, $f={\bf 1}$ satisfies the generalized Euler-Lagrange equation $f=\lambda (|\widehat{f\sigma}|^4 \widehat{f\sigma})^{\vee}\mid_{\mathbb{S}^1}$ that characterizes critical points, see \cite[Proposition 2.1]{CQ} for details. We give the proof of Theorem \ref{Thm1} in Section \ref{sec:local_ext}. 

\smallskip

Our second and main result concerns a trilinear form related to Fourier restriction. To motivate this trilinear form, we start by using Plancherel's identity and writing
\begin{align}
\|\widehat{f\sigma}\|_{L^6(\R^2)}^6 & =(2\pi)^2 \|f\sigma\ast f\sigma\ast f\sigma\|_{L^2(\R^2)}^2 \notag \\
& =(2\pi)^2 (f\sigma) \ast (f\sigma) \ast (f\sigma) \ast (f_\star \sigma) \ast (f_\star \sigma) \ast (f_\star \sigma) (0) \notag \\
& =(2\pi)^2
\int_{(\S^1)^6} f(\omega_1) f(\omega_2)f(\omega_3)f_\star(\omega_4) f_\star(\omega_5)f_\star (\omega_6)
\,  \d\Sigma_{\vec{\omega}},\label{integralffffff}
\end{align}
where $f_\star(\omega)=\overline{f(-\omega)}$ and 
$$\d\Sigma_{\vec{\omega}}=\delta(\omega_1+\omega_2+\omega_3+\omega_4+\omega_5+\omega_6)\,\d\sigma_{\omega_1}\,\d\sigma_{\omega_2}\,\d\sigma_{\omega_3}\,\d\sigma_{\omega_4}\,\d\sigma_{\omega_5}\,\d\sigma_{\omega_6}.$$
Here $\delta$ stands for the two dimensional Dirac measure.
Note that the measure $\d\Sigma_{\vec{\omega}}$ is supported on the four dimensional manifold $\Gamma \subset (\S^1)^6$
 determined by
\begin{equation}\label{Intro_Prop5_Geom}
\omega_1+\omega_2+\omega_3+\omega_4+\omega_5+\omega_6=0.
\end{equation}

We define the trilinear form:
 \begin{equation}\label{definet}
T(h_1, h_2, h_3) := \int_{(\S^1)^6} h_1(\omega_1) h_2(\omega_2) h_3(\omega_3) \big(|\omega_4+\omega_5+\omega_6|^2-1\big)\,\d\Sigma_{\vec{\omega}}.
\end{equation}
The main result of this paper is the following monotonicity estimate, obtained in Section \ref{sec:cubic} via a spectral decomposition and a careful analysis of integrals involving Bessel functions. By antipodally symmetric function we mean a function $h$ on $\S^1$ with 
$h(\omega)={h(-\omega)}$.
\begin{theorem}\label{Thm7}
Let $h \in L^1(\S^1)$ be a nonnegative and antipodally symmetric function. Let $c = \frac{1}{2\pi} \int_{\S^1} h(\omega)  \,\d\sigma_{\omega}$ be the mean value of $h$. Then 
\begin{equation*}
T(h,h,h) \leq T(c,c,c),
\end{equation*}
with equality if and only if $h$ is constant.
\end{theorem}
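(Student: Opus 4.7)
The plan is Fourier-analytic, reducing the trilinear form to a sum of explicit sixfold Bessel integrals of the sort studied in \cite{OST}. I would start from the physical-side representation
\[ T(h,h,h) = \int_{\R^2}(|y|^2-1)(\sigma*\sigma*\sigma)(y)\,(h\sigma*h\sigma*h\sigma)(y)\,\d y,\]
obtained by integrating out $\omega_4,\omega_5,\omega_6$ against the delta in $\d\Sigma_{\vec\omega}$ and substituting $|\omega_4+\omega_5+\omega_6|^2 = |\omega_1+\omega_2+\omega_3|^2$. Applying Plancherel, using that $\widehat{h\sigma}$ is real-valued for real antipodally symmetric $h$, gives
\[ T(h,h,h) = (2\pi)^{-2}\int_{\R^2} \bigl[(-\Delta_\xi-1)\widehat{\sigma}(\xi)^3\bigr]\,\widehat{h\sigma}(\xi)^3\,\d\xi.\]
The key simplification is that $(-\Delta-1)\widehat{\sigma}=0$ because $\sigma$ is supported on $\{|\omega|=1\}$; hence $(-\Delta-1)\widehat{\sigma}^3 = 2\widehat{\sigma}(\widehat{\sigma}^2 - 3|\nabla\widehat{\sigma}|^2)$, and since $\widehat{\sigma}(\xi)=2\pi J_0(|\xi|)$, the kernel becomes proportional to $J_0(r)(J_0(r)^2 - 3J_1(r)^2)$ with $r=|\xi|$.

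Next I would expand $h$ in Fourier modes on $\S^1$; antipodal symmetry restricts to even frequencies, $h(\theta)=\sum_{k\in 2\Z} a_k e^{ik\theta}$, with $a_0=c$ and $a_{-k}=\overline{a_k}$. The Jacobi--Anger formula gives $\widehat{h\sigma}(re^{i\phi}) = 2\pi\sum_{k\in 2\Z} (-i)^k a_k J_k(r) e^{ik\phi}$; cubing and integrating in $\phi$ selects triples with $k_1+k_2+k_3=0$, producing
\[ T(h,h,h) = C\!\!\!\sum_{\substack{k_1+k_2+k_3=0\\ k_i\in 2\Z}}\!\!\! a_{k_1}a_{k_2}a_{k_3}\,\mathcal{I}(k_1,k_2,k_3),\]
where $\mathcal{I}(k_1,k_2,k_3)=\int_0^\infty J_{k_1}(r)J_{k_2}(r)J_{k_3}(r)J_0(r)(J_0(r)^2-3J_1(r)^2)\,r\,\d r$ is a sixfold Bessel integral.

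I would then split the sum by the number of vanishing indices. The term $k_1=k_2=k_3=0$ reproduces $T(c,c,c)$; linear terms (two indices equal to zero and one nonzero) are forbidden by the constraint $\sum k_i=0$, so they vanish (this is the Fourier-side incarnation of the Euler--Lagrange identity, reflecting rotational invariance); terms with exactly one zero index yield a quadratic form $3c\sum_{k\in 2\Z\setminus\{0\}} |a_k|^2\mathcal{I}(0,k,-k)$; and terms with all indices nonzero contribute a cubic correction. The inequality $T(h,h,h)\le T(c,c,c)$ thus reduces to the assertion that this quadratic-plus-cubic remainder is non-positive, with strict inequality whenever some $a_k \neq 0$ for $k\neq 0$.

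The main obstacle is the precise sign and magnitude analysis of the sixfold Bessel integrals $\mathcal{I}(k_1,k_2,k_3)$, which I would import from the companion paper \cite{OST}. What is needed is (i) strict negativity $\mathcal{I}(0,k,-k)<0$ for every even $k\ne 0$, ensuring the quadratic form is strictly negative definite on the mean-zero even-mode subspace, and (ii) bounds on $|\mathcal{I}(k_1,k_2,k_3)|$ sharp enough that the cubic sum is dominated by the negative quadratic one. The nonnegativity hypothesis $h\ge 0$ enters decisively through the elementary Fourier bound $|a_k|\le a_0 = c$, which supplies the extra factor of $c$ needed to absorb the cubic into the quadratic. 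The equality case then follows at once: one needs $a_k=0$ for every even $k\ne 0$, i.e.\ $h\equiv c$.
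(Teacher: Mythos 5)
Your reduction is correct as far as it goes, and your route to the kernel is in fact a pleasant variant of the paper's: instead of expanding $|\omega_4+\omega_5+\omega_6|^2-1=2(1+\omega_4\cdot\omega_5+\omega_5\cdot\omega_6+\omega_6\cdot\omega_4)$ and computing with circular harmonics $e_{\pm 1}$, you use the Helmholtz equation $(\Delta+1)\widehat{\sigma}=0$ to produce the radial weight $J_0(r)\bigl(J_0(r)^2-3J_1(r)^2\bigr)$ directly. This yields exactly the integrals the paper calls $\beta_n$ (your $-\mathcal{I}(0,k,-k)$) and $\delta_{n,m}$, and your observations that the linear term vanishes and that the quadratic term is a negative definite form $-\mathrm{const}\cdot c\sum_{k}|a_k|^2\beta_k$ coincide with the paper's treatment of the linear and bilinear terms. (Minor point: to justify the mode-by-mode manipulations for general $h\in L^1$ you should first reduce to trigonometric polynomials, as the paper does via Fej\'er approximation, and note that each nonzero even mode contributes strictly negatively so the equality case survives the limit.)

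The genuine gap is your item (ii). The statement that the cubic sum is dominated by the quadratic one is the actual content of the theorem, and it cannot simply be imported from the companion paper \cite{OST}, which only provides asymptotics for the individual integrals (Theorems \ref{alpha_n} and \ref{gamma_n}, hence Corollaries \ref{beta_n} and \ref{delta_n}); the summation argument lives in this paper and is delicate. After reducing by the lattice symmetries to showing $\bigl|\sum_{n,m\in(2\N)^\times}\widehat{h}(n)\widehat{h}(m)\ov{\widehat{h}(n+m)}\,\delta_{n,m}\bigr|\le\|\widehat{h}\|_\infty\sum_{n\in(2\N)^\times}|\widehat{h}(n)|^2\beta_n$, the paper splits the left side into six pieces, treats the modes $m=2$ and $m=4$ separately precisely because the crude bound $|\widehat{h}(m)|\le c$ that you invoke is too lossy there (instead it optimizes explicitly over $x=|\widehat{h}(2)|$ and $x=|\widehat{h}(4)|$ against the weighted mass $\sum_n|\widehat{h}(n)|^2n^{-3}$, using $\zeta(3)$ bounds), isolates the main term $\eta_{n,4}$ of $\delta_{n,4}$, and controls the bulk $m\ge 6$ via Cauchy--Schwarz together with the sharp Hardy inequality, ending with the numerical constant $0.974<1$. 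Since the final margin is only a few percent, a generic scheme of the form ``bound each $|a_k|$ by $c$ and use decay of the Bessel integrals'' is not known to close, and no such estimate is carried out in your proposal; as written, the decisive quantitative step, and with it the inequality and its equality characterization, is missing.
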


This bound for the trilinear form $T$ is the penultimate step
in a six-step program that we propose to obtain the sharp form of the Tomas-Stein adjoint restriction inequality \eqref{Intro_TomasStein} and characterize its global extremizers.
 A similar program was used in \cite{F2} to obtain the sharp endpoint $L^2 - L^4$ Tomas-Stein adjoint restriction inequality on the sphere $\S^2$, and subsequently in \cite{COS} to obtain the sharp non-endpoint $L^2 - L^4$ estimate on the sphere $\S^d$ for $3\leq d \leq 6$. 
 In this paper  we complete all the steps of this program in the case of $\S^1$, except for Step 4 which remains unresolved and that we pose as a conjecture.
 
 \smallskip
 
We briefly describe each of these steps, which result in a proof of the conditional Theorem \ref{Thm2} below.

\subsubsection{Step 1. Reduction to nonnegative functions}  
Since $|f\sigma * f\sigma * f\sigma|\leq |f|\sigma * |f|\sigma * |f|\sigma$ holds pointwise, it follows that 
\begin{equation}\label{Intro_Prop3_eq1}
\big\|f\sigma * f\sigma * f\sigma\big\|_{L^2(\R^2)} \leq  \big\||f|\sigma * |f|\sigma * |f|\sigma\big\|_{L^2(\R^2)}.
\end{equation}
Here equality holds 
if and only if there is a measurable complex-valued function $h$ on the closed ball $\ov{B(3)} \subset \R^2$ of radius $3$ centered at the origin such that
\begin{equation*}
 f(\omega_1)\, f(\omega_2)\,f(\omega_3) = h(\omega_1 + \omega_2 + \omega_3) \,\big|f(\omega_1)\, f(\omega_2)\,f(\omega_3)\big|
\end{equation*}
for $\sigma^3-$a.e. $(\omega_1, \omega_2, \omega_3) \in (\S^{1})^3$. 
This can be seen as in the proof of \cite[Lemma 8]{COS}. Compare also with \cite{CS, F2}.

\subsubsection{Step 2. Reduction to antipodally symmetric functions} 

Define the nonnegative, antipodally symmetric rearrangement $f_\sharp$ of a function $f \in L^2(\S^1)$ by
$$f_\sharp:=\sqrt{\frac{|f|^2+|f_\star|^2}{2}}.$$
If $f$ is in $L^2(\S^1)$, then so is its antipodal rearrangement, with $\|f_{\sharp}\|_{L^2(\S^1)} =\|f\|_{L^2(\S^1)}$.
A simple application of the arithmetic/geometric mean inequality as in  \cite[Corollary 3.3]{F2} shows that 
\begin{equation}\label{antipodal}
\int_{(\S^1)^6} f(\omega_1) f(\omega_2)f(\omega_3)f_\star(\omega_4) f_\star(\omega_5)f_\star (\omega_6)
 \,\d\Sigma_{\vec{\omega}}
\le 
\int_{(\S^1)^6} f_\sharp(\omega_1) f_\sharp(\omega_2)f_\sharp(\omega_3)f_\sharp(\omega_4) f_\sharp(\omega_5)f_\sharp (\omega_6)
  \,\d\Sigma_{\vec{\omega}}.
\end{equation}
Here equality holds 
if and only if $f= f_{\star} = f_{\sharp}$ {\rm (}$\sigma-$a.e. in $\S^{1}${\rm)}. This follows as 
in the proof of \cite[Lemma 9]{COS}.

From inequalities~\eqref{Intro_Prop3_eq1} and~\eqref{antipodal}
it follows that
\begin{equation*}
{\bf C_{{\rm opt}}}=\sup_{0\neq f\in L^2(\S^1),\,f\ge 0,\, f=f_\star} \ \Phi(f).
\end{equation*}
We may hence assume that our candidate  $f\in L^2(\S^1)$ to being an extremizer of \eqref{Intro_TomasStein} is also a nonnegative, antipodally symmetric function.

\subsubsection{Step 3. Geometric considerations} 

Suppose that we naively try to follow the method used in \cite{F} 
 and apply the Cauchy-Schwarz inequality directly to the last integral in \eqref{integralffffff} (or in~\eqref{antipodal}).
We would obtain
\begin{align*} 
  \|\widehat{f\sigma}\|_{L^6(\R^2)}^6 &\le (2 \pi)^2
  \int_{(\S^1)^6} |f(\omega_1)|^2 |f(\omega_2)|^2 |f(\omega_3)|^2
  \, \d\Sigma_{\vec{\omega}} \\
  &= (2 \pi)^2
  \int_{(\S^1)^3} |f(\omega_1)|^2 |f(\omega_2)|^2 |f(\omega_3)|^2
  \sigma \ast \sigma \ast \sigma(\omega_1 + \omega_2 + \omega_3)
  \,\d\sigma_{\omega_1}\,\d\sigma_{\omega_2}\,\d\sigma_{\omega_3}.
\end{align*}
If the 3-fold convolution product $\sigma \ast \sigma \ast \sigma$ were a constant function
inside its support,
then the last integral would reduce to a constant multiple of $\|f\|_{L^2(\S^1)}^6$,
and we would immediately obtain  the estimate \eqref{Intro_TomasStein}.
Unfortunately, the quantity $\sigma \ast \sigma \ast \sigma(x)$
diverges logarithmically as $x$ approaches the unit circle;
the singularity of $\sigma \ast \sigma \ast \sigma$ will be described in Section \ref{sec:convolutions}.
This singularity can be neutralized if in the integral \eqref{antipodal}  
we insert an appropriate weight which vanishes when the sum of three unit vectors is again a unit vector.
This is made possible thanks to the geometrical identity illustrated in the next lemma.

\begin{lemma}
If $(\omega_1, \omega_2, \omega_3, \omega_4, \omega_5, \omega_6)\in\Gamma$, then 
\begin{equation}\label{Intro_Prop5_Geom_2}
\sum_{6\choose 3} \big(|\omega_i+\omega_j+\omega_k|^2-1\big)=16,
\end{equation}
where the sum above runs over all the ${6\choose 3}=20$ different choices of unordered distinct indices $i,j,k\in\{1,2,3,4,5,6\}$. 
\end{lemma}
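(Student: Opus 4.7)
The plan is a direct expansion. For any three-element subset $\{i,j,k\} \subset \{1,\ldots,6\}$,
\[
|\omega_i+\omega_j+\omega_k|^2 = |\omega_i|^2+|\omega_j|^2+|\omega_k|^2 + 2(\omega_i\cdot\omega_j + \omega_j\cdot\omega_k + \omega_i\cdot\omega_k).
\]
Summing over all $\binom{6}{3}=20$ such subsets and using $|\omega_i|^2=1$, the diagonal piece contributes $3 \cdot 20 = 60$. For the off-diagonal piece, each unordered pair $\{i,j\}$ appears in exactly $\binom{4}{1}=4$ of the triples, so the total contribution is $2 \cdot 4 \cdot \sum_{i<j}\omega_i\cdot\omega_j = 8 \sum_{i<j}\omega_i\cdot\omega_j$.

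Now I would use the constraint $(\omega_1,\ldots,\omega_6)\in\Gamma$, i.e.\ $\omega_1+\cdots+\omega_6=0$. Squaring this vanishing sum,
\[
0 = \Big|\sum_{i=1}^6 \omega_i\Big|^2 = \sum_{i=1}^6|\omega_i|^2 + 2\sum_{i<j}\omega_i\cdot\omega_j = 6 + 2\sum_{i<j}\omega_i\cdot\omega_j,
\]
so $\sum_{i<j}\omega_i\cdot\omega_j = -3$. Substituting back gives $\sum_{\{i,j,k\}}|\omega_i+\omega_j+\omega_k|^2 = 60 - 24 = 36$, and subtracting the $20$ copies of $-1$ yields $36-20=16$, as claimed.

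There is no real obstacle here: the identity is purely algebraic and follows from a multiplicity count combined with the single quadratic consequence of $\sum_i \omega_i = 0$. As a sanity check one may also note the involution $\{i,j,k\}\mapsto\{1,\ldots,6\}\setminus\{i,j,k\}$, under which $|\omega_i+\omega_j+\omega_k|^2$ is invariant because the complementary triple sums to the negative; this pairs the $20$ summands into $10$ equal pairs and is consistent with the final value being even, but is not needed for the calculation above.
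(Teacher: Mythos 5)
Your proof is correct and follows the same route the paper indicates: square the constraint $\omega_1+\cdots+\omega_6=0$ to get $\sum_{i<j}\omega_i\cdot\omega_j=-3$, then expand the sum over the $20$ triples with the multiplicity count. The bookkeeping ($60+8\cdot(-3)-20=16$) checks out, so nothing further is needed.
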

For the proof, one squares \eqref{Intro_Prop5_Geom} and expands \eqref{Intro_Prop5_Geom_2} to arrive at the desired conclusion. 

Using this identity, we can write
\begin{align*}
\|\widehat{f\sigma}\|_{L^6(\R^2)}^6
&=(2\pi)^2\frac{1}{16}\sum_{6\choose 3}\int_{(\S^1)^6}f(\omega_1)f(\omega_2)f(\omega_3)f(\omega_4)f(\omega_5)f(\omega_6)\big(|\omega_i+\omega_j+\omega_k|^2-1\big)\,\d\Sigma_{\vec{\omega}}\\
& =(2\pi)^2\frac{5}{4}\int_{(\S^1)^6} f(\omega_1)f(\omega_2)f(\omega_3)f(\omega_4)f(\omega_5)f(\omega_6)\big(|\omega_4+\omega_5+\omega_6|^2-1\big)\,\d\Sigma_{\vec{\omega}}\,,
\end{align*}
since by symmetry all 20 integrals in the first line of the last display have the same numerical value.

\subsubsection{Step 4. Reduction to a trilinear problem}  At this point in the program \cite{F2}, a similar weight as $ \big(|\omega_4+\omega_5+\omega_6|^2-1\big)$ has been introduced,
albeit nonnegative. The program there continues with an application of the Cauchy-Schwarz inequality. Since our weight is partially negative,  we cannot simply apply the Cauchy-Schwarz inequality. 
Nevertheless, we pose this inequality as a conjecture:

\begin{conjecture}\label{CSconj}
Let $f\in L^2(\S^1)$ be nonnegative and antipodally symmetric. Then: 
\begin{multline}\label{Intro_CS}
\int_{(\S^1)^6} f(\omega_1)f(\omega_2)f(\omega_3)f(\omega_4)f(\omega_5)f(\omega_6)
\big(|\omega_4+\omega_5+\omega_6|^2-1\big)\,\d\Sigma_{\vec{\omega}} \\
\leq \int_{(\S^1)^6} f(\omega_1)^2f(\omega_2)^2f(\omega_3)^2\big(|\omega_4+\omega_5+\omega_6|^2-1\big)\,\d\Sigma_{\vec{\omega}}.
\end{multline}
\end{conjecture}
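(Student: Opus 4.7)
My proposal is to begin by restating the inequality on the convolution side, where the underlying pointwise Cauchy-Schwarz bound becomes transparent, and then to isolate the sign problem caused by the weight $(|y|^2-1)$. Define the three-fold convolutions $G_f := f\sigma \ast f\sigma \ast f\sigma$ and $H_{f^2} := f^2\sigma \ast f^2\sigma \ast f^2\sigma$, both supported in $\overline{B(3)} \subset \R^2$. Introducing $y = \omega_1+\omega_2+\omega_3$ via the identity $\int \delta(\omega_1+\omega_2+\omega_3 - y)\,dy = 1$, and integrating out the $\omega_i$'s in \eqref{Intro_CS}, one rewrites both sides as
\begin{equation*}
\text{LHS} = \int_{\R^2} G_f(y)\,G_f(-y)\,(|y|^2-1)\,dy, \qquad \text{RHS} = \int_{\R^2} H_{f^2}(y)\,(\sigma\ast\sigma\ast\sigma)(-y)\,(|y|^2-1)\,dy.
\end{equation*}
Because $f$ is antipodally symmetric and $\sigma$ is invariant under $\omega\mapsto -\omega$, both $G_f$ and $\sigma^{\ast 3}$ are even in $y$, so the conjecture reduces to the nonnegativity of $\int_{\R^2} D(y)(|y|^2-1)\,dy$, where
\begin{equation*}
D(y) := H_{f^2}(y)\,(\sigma\ast\sigma\ast\sigma)(y) - G_f(y)^2.
\end{equation*}
A pointwise Cauchy-Schwarz inequality against the fibre measure $\delta(\omega_1+\omega_2+\omega_3-y)\,\dsigma_{\omega_1}\dsigma_{\omega_2}\dsigma_{\omega_3}$ already gives $D(y)\geq 0$ for every $y$.

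\medskip

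The \textbf{principal obstacle} is precisely that the weight $(|y|^2-1)$ is negative on the open unit disk $|y|<1$ and positive on the annulus $1<|y|<3$, so the pointwise bound $D\geq 0$ does not by itself close the estimate. Compounding this, $\sigma^{\ast 3}$ (and hence $G_f$ and $H_{f^2}$) has a logarithmic singularity along $|y|=1$, as will be analysed in Section \ref{sec:convolutions}, so the negative contribution from $|y|$ just below $1$ is not a priori negligible. The heart of any direct attack should be a sharp quantitative comparison: a refined lower bound on $D$ for $|y|$ bounded away from $1$, together with an upper bound controlling the rate at which $D$ blows up as $|y|\to 1^-$, arranged so that the positive contribution from the annulus $1<|y|<3$ dominates the negative lobe inside the unit disk.

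\medskip

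A parallel, and I suspect more promising, route is a spectral decomposition in the spirit of Theorem \ref{Thm7}. Expanding the antipodally symmetric nonnegative function as $f(e^{i\theta}) = \sum_{k\in 2\Z} a_k e^{ik\theta}$ with $a_{-k}=\overline{a_k}$, one substitutes into both sides of \eqref{Intro_CS}; all resulting integrals over $(\S^1)^6$ collapse to integrals of sixfold products of Bessel functions against a single radial variable, precisely the objects treated in the companion paper \cite{OST}. The inequality then becomes a positive semi-definiteness statement for an explicit infinite quadratic form in the cubic monomials $a_{k_1}a_{k_2}a_{k_3}$ subject to parity constraints coming from the antipodal symmetry. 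Unlike in Theorem \ref{Thm7}, where the comparison is between a nontrivial function and a single scalar (its mean value), here both sides are sextic in $f$, so the form to be checked is genuinely nondegenerate; verifying its positive semi-definiteness mode by mode is the step I expect to require the deepest input from \cite{OST}, and is the principal reason the statement is still only a conjecture.
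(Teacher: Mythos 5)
The statement you were asked to address is posed in the paper as Conjecture~\ref{CSconj}; the paper itself contains no proof of it, and indeed the authors explicitly flag Step 4 of their program (this very inequality) as the one unresolved ingredient. So there is no ``paper's own proof'' to compare against, and your candid conclusion that the statement remains a conjecture is the correct outcome.

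That said, your preparatory reduction is correct and worth recording. Writing $y=\omega_1+\omega_2+\omega_3$, using $\omega_4+\omega_5+\omega_6=-y$ on the support of $\d\Sigma_{\vec\omega}$, and invoking evenness of $G_f$ and $\sigma^{\ast 3}$ (for $f$ antipodally symmetric) does reformulate the conjecture as the nonnegativity of $\int_{\R^2} D(y)\,(|y|^2-1)\,\dy$ with $D(y)=H_{f^2}(y)\,\sigma^{\ast 3}(y)-G_f(y)^2\ge 0$ pointwise by Cauchy--Schwarz against the fibre measure. This is precisely the observation recorded in the paper (inequality~\eqref{antipodal} and the surrounding discussion), and your $D$-integral is, up to a factor of $2$, the paper's functional $\Psi(f)$ from~\eqref{Sec5_Eq1}: expanding the square in $\Psi$ and using the $\{1,2,3\}\leftrightarrow\{4,5,6\}$ symmetry gives $\Psi(f)=2\bigl(\text{RHS}-\text{LHS}\bigr)$. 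You also identify the correct obstruction: the weight $(|y|^2-1)$ is negative on $|y|<1$, and $\sigma^{\ast 3}$ blows up logarithmically as $|y|\to 1$, so the pointwise positivity of $D$ does not close the argument. The paper makes the same diagnosis, adding that antipodal symmetry correlates the positive and negative lobes of the weight but does not preserve the support of $\d\Sigma_{\vec\omega}$, which is why that correlation has not yet been exploitable. What the paper does manage to prove --- and what your proposal does not attempt --- is the local result Theorem~\ref{Prop12_local_CS}: for real-valued $f$ with $\|f-{\bf 1}\|_{L^2(\S^1)}<\delta$ one has $\Psi(f)\ge 0$, established via exactly the spectral/Bessel-function route you sketch as your second avenue, reduced to checking that the single-mode coefficients $c_n=\alpha_0+2(-1)^n\alpha_n-3\widetilde\alpha_0-12(-1)^n\widetilde\alpha_n$ are bounded away from zero. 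If you want to push your proposal further, that quadratic-expansion-about-the-constant is the concrete next step; the genuinely open problem is upgrading it to the full sextic form.
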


Numerical simulations suggest that this inequality holds. One reason to believe so is that the negative portion of the weight is small, and via
 antipodal symmetry the values of the functions on this negative portion have a strong correlation with the values of the functions on the positive part. However, the antipodal symmetry does not preserve the support of $\d\Sigma_{\vec{\omega}}$, which makes it difficult to exploit this correlation. 
 
\smallskip
 
If on the right-hand side of \eqref{Intro_CS} we replace  $\omega_4+\omega_5+\omega_6$ by $\omega_1+\omega_2+\omega_3$ and integrate out $\omega_4$, $\omega_5$ and $\omega_6$,  we obtain
an additional weight given by the
3-fold
convolution product $\sigma \ast \sigma \ast \sigma(|\omega_1+\omega_2+\omega_3|)$.
As we have already observed, this  convolution
has a logarithmic singularity at $|\omega_1+\omega_2+\omega_3|=1$, which disappears when multiplied by the weight $|\omega_1+\omega_2+\omega_3|^2-1$, in analogy
 to the program of \cite{F2}.
    
\subsubsection{Step 5. Spectral analysis of a cubic form}  
The right-hand side of \eqref{Intro_CS}  invokes the trilinear form $T$ of our main Theorem \ref{Thm7}. Thus, using \eqref{Intro_CS} and
Theorem \ref{Thm7} yields  for nonnegative, antipodally symmetric functions $f$:

\begin{equation}\label{Intro_chain}
\|\widehat{f\sigma}\|_{L^6(\R^2)}^6 \leq {(2\pi)^2}\,\frac{5}{4}\,T(f^2,f^2,f^2) 
 \leq{(2\pi)^2}\,\frac{5}{4}\,\frac{\|f\|_{L^2(\S^1)}^6}{\|{\bf 1}\|_{L^2(\S^1)}^6}\,T({\bf 1},{\bf 1},{\bf 1})
 =\frac{\|\widehat{\sigma}\|_{L^6(\R^2)}^6}{\|{\bf 1}\|_{L^2(\S^1)}^6}\|f\|_{L^2(\S^1)}^6.
\end{equation} 
This proves the first part of Theorem \ref{Thm2} below.  

\subsubsection{Step 6. Characterizing the complex-valued extremizers} If $f \in L^2(\S^{1})$ is a complex-valued extremizer of \eqref{Intro_chain}, by Theorem \ref{Thm7} we must have $|f|_{\sharp} = \gamma\,{\bf 1}$, where $\gamma >0$ is a constant. By the discussion in Step 2 above  we must have $|f| = \gamma\,{\bf 1}$. By the discussion in Step 1 above there is a measurable function $h:\ov{B(3)} \to \C$ such that 
\begin{equation*}
 f(\omega_1)\, f(\omega_2) \, f(\omega_3)= \gamma^3\, h(\omega_1 + \omega_2 + \omega_3) 
\end{equation*}
for $\sigma^3-$a.e. $(\omega_1, \omega_2, \omega_3) \in (\S^{1})^3$. We now invoke \cite[Theorem 4]{COS} (which is originally inspired in the work of Charalambides \cite{Ch}) to conclude that there exist $c \in \C\setminus \{0\}$ and $\nu \in \C^2$ such that 
$$f(\omega) = c\,e^{\nu \cdot \omega}$$
for $\sigma-$a.e. $\omega \in \S^{1}$. Since $|f|$ is constant, we must have $\Re(\nu) = 0$ and $|c| = \gamma$. This completes the proof of the following theorem.

\begin{theorem}\label{Thm2}
Assume the validity of Conjecture \ref{CSconj}. Then 
\begin{equation*}
{\bf C_{{\rm opt}}} = (2\pi)^{-1/2} \|\widehat{\sigma}\|_{L^6(\R^2)}.
\end{equation*}
Moreover, all complex-valued extremizers of \eqref{Intro_TomasStein} are given by
\begin{equation*}
f(\omega) = c \,e^{i\xi\cdot \omega},
\end{equation*}
where $c \in \C\setminus \{0\}$ and $\xi \in \R^2$.
\end{theorem}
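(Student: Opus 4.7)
The plan is to assemble the six-step program sketched in the introduction into a complete proof: Steps 1--3 perform reductions, Step 4 (Conjecture \ref{CSconj}) and Step 5 (Theorem \ref{Thm7}) supply the two key estimates, and Step 6 chases equality cases to identify the extremizers.

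For the sharp constant, I would start from an arbitrary $f \in L^2(\S^1)\setminus\{0\}$ and reduce via Step 1 (the pointwise domination $|f\sigma \ast f\sigma \ast f\sigma| \leq |f|\sigma \ast |f|\sigma \ast |f|\sigma$) and Step 2 (the antipodal rearrangement $f \mapsto f_\sharp$ preserves the $L^2$ norm and does not decrease the sixfold integral in \eqref{integralffffff}) to the case where $f$ is nonnegative and antipodally symmetric. Step 3 then rewrites
\begin{equation*}
\|\widehat{f\sigma}\|_{L^6(\R^2)}^6 = (2\pi)^2\,\tfrac{5}{4} \int_{(\S^1)^6} f(\omega_1)\cdots f(\omega_6)\big(|\omega_4+\omega_5+\omega_6|^2 - 1\big)\,\d\Sigma_{\vec{\omega}}.
\end{equation*}
Applying Conjecture \ref{CSconj} bounds this by $(2\pi)^2\tfrac{5}{4}\, T(f^2, f^2, f^2)$; since $f^2$ is nonnegative and antipodally symmetric with mean value $c = \|f\|_{L^2(\S^1)}^2/(2\pi)$, Theorem \ref{Thm7} combined with the trilinearity of $T$ yields
\begin{equation*}
T(f^2,f^2,f^2) \leq T(c,c,c) = c^3\, T({\bf 1},{\bf 1},{\bf 1}) = \frac{\|f\|_{L^2(\S^1)}^6}{\|{\bf 1}\|_{L^2(\S^1)}^6}\, T({\bf 1},{\bf 1},{\bf 1}),
\end{equation*}
which is the chain \eqref{Intro_chain}. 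Testing that chain at $f = {\bf 1}$ shows that every inequality appearing in it is in fact an equality, identifying $T({\bf 1},{\bf 1},{\bf 1})$ via $\|\widehat{\sigma}\|_{L^6(\R^2)}^6 = (2\pi)^2\tfrac{5}{4}\, T({\bf 1},{\bf 1},{\bf 1})$. Substituting back and using $\|{\bf 1}\|_{L^2(\S^1)}^2 = 2\pi$ collapses the chain to $\Phi(f) \leq (2\pi)^{-1/2}\|\widehat{\sigma}\|_{L^6(\R^2)}$ with equality at $f={\bf 1}$, so ${\bf C_{\rm opt}} = (2\pi)^{-1/2}\|\widehat{\sigma}\|_{L^6(\R^2)}$.

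To characterize the complex-valued extremizers I would then chase equality backward through the reductions. Equality in Theorem \ref{Thm7} forces $f^2$, hence $|f|$, to be a positive constant $\gamma$. Equality in Step 2 forces $f = f_\star = f_\sharp$, and equality in Step 1 produces a measurable $h \colon \overline{B(3)} \to \C$ with
\begin{equation*}
f(\omega_1)\,f(\omega_2)\,f(\omega_3) = \gamma^3\, h(\omega_1+\omega_2+\omega_3) \quad \text{for } \sigma^3\text{-a.e. triple.}
\end{equation*}
The multiplicative rigidity result \cite[Theorem 4]{COS} (after Charalambides \cite{Ch}) then forces $f(\omega) = c\, e^{\nu\cdot\omega}$ with $c \in \C\setminus\{0\}$ and $\nu \in \C^2$; the constraint $|f| = \gamma$ imposes $\Re\nu = 0$, so $\nu = i\xi$ with $\xi \in \R^2$ and $|c| = \gamma$. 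Conversely, for any such exponential the translation identity $\widehat{f\sigma}(x) = c\,\widehat{\sigma}(x-\xi)$ gives $\Phi(f) = \Phi({\bf 1}) = {\bf C_{\rm opt}}$, confirming that $f$ is an extremizer. The entire argument is essentially bookkeeping on top of Theorem \ref{Thm7}; the only genuine obstacle is Conjecture \ref{CSconj} itself, whose weight fails to be nonnegative and therefore resists the direct Cauchy-Schwarz argument used in \cite{F2}.
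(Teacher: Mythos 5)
Your proof is correct and follows essentially the same route as the paper: assemble the six-step program (reductions in Steps 1--2, the geometric identity in Step 3, Conjecture \ref{CSconj} in Step 4, Theorem \ref{Thm7} in Step 5), read off the constant from the chain \eqref{Intro_chain} tested at $f={\bf 1}$, and then trace equality backward through the reductions to invoke \cite[Theorem 4]{COS}. The only minor imprecision is that the Theorem \ref{Thm7} equality case should be applied to $(|f|_\sharp)^2$ rather than to ``$f^2$'' for the original complex-valued $f$, after which Steps 2 and 1 are unwound exactly as you describe; this is a bookkeeping slip rather than a gap.
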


\smallskip

The endpoint problem for the sphere $\S^2$ discussed in \cite{F2} is simpler than the above in Steps 4 and 5. In Step 4, one faces the 
 convolution of the surface measure of the sphere with itself, which has a singularity at the origin, and one can choose a nonnegative weight vanishing at the origin, so that the corresponding Step 4 follows from a plain application of the Cauchy-Schwarz inequality.
In Step 5, the analogue spectral analysis is over a bilinear rather than trilinear form. One uses the Funk-Hecke formula and properties of the Gegenbauer polynomials to show that a certain bilinear term has a sign. This is considerably simpler than the proof of Theorem \ref{Thm7}.

\smallskip

As evidence towards Conjecture \ref{CSconj} we prove the following local result  in Section \ref{sec:evidence_CS}. Define

\begin{equation}\label{Sec5_Eq1}
\Psi(f) := \int_{(\S^1)^6} \big(f(\omega_1)f(\omega_2)f(\omega_3) - f(\omega_4)f(\omega_5)f(\omega_6)\big)^2\, \big( |\omega_4 + \omega_5 + \omega_6|^2 -1 \big)\,\d\Sigma_{\vec{\omega}}.
\end{equation}
Observe that $\Psi({\bf 1})$ is identically zero
and that Conjecture \ref{CSconj} is equivalent to the fact that $\Psi(f) \geq 0$ for $f\in L^2(\S^1)$ nonnegative and antipodally symmetric.

\begin{theorem}\label{Prop12_local_CS}
There exists $\delta>0$ such that, whenever $f$ is real-valued and $\|f-{\bf 1}\|_{L^2(\mathbb{S}^1)}<\delta$, we have
$\Psi(f)\geq 0$.
\end{theorem}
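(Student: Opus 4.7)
The plan is to Taylor-expand $\Psi$ around the critical point $f=\mathbf{1}$, prove coercivity of the quadratic part, and absorb higher-order remainders by smallness. First, writing $f=\bar f+g_0$ with $\bar f=(2\pi)^{-1}\int f\,d\sigma$ and $g_0$ of mean zero, the bound $|\bar f-1|\le (2\pi)^{-1/2}\|f-\mathbf{1}\|_{L^2}$ together with the degree-six homogeneity $\Psi(\lambda f)=\lambda^6\Psi(f)$ reduces the statement to proving $\Psi(\mathbf{1}+g)\ge 0$ for real $g$ of mean zero and $\|g\|_{L^2(\mathbb{S}^1)}$ arbitrarily small. Using $(A-B)^2=A^2+B^2-2AB$ together with the $(1,2,3)\leftrightarrow(4,5,6)$-symmetry of $d\Sigma_{\vec\omega}$, one rewrites
\begin{equation*}
\Psi(f)=2\int_{(\mathbb{S}^1)^6}\bigl(f_1^2f_2^2f_3^2-f_1f_2f_3f_4f_5f_6\bigr)\,W\,d\Sigma_{\vec\omega},\qquad W=|\omega_4+\omega_5+\omega_6|^2-1,
\end{equation*}
and then Taylor-expands $\Psi(\mathbf{1}+g)=\sum_{k=0}^6\Psi_k(g)$ into $k$-homogeneous pieces in $g$. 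The constant parts cancel between the two integrands and the linear part vanishes identically by the $(1,2,3)\leftrightarrow(4,5,6)$-symmetry, so $\Psi_0=\Psi_1=0$.

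The quadratic part is computed directly as
\begin{equation*}
\Psi_2(g)=\int_{(\mathbb{S}^1)^6}\Bigl(\sum_{i=1}^{3}g(\omega_i)-\sum_{i=4}^{6}g(\omega_i)\Bigr)^{\!2}W\,d\Sigma_{\vec\omega}.
\end{equation*}
Full rotation-invariance of $d\Sigma_{\vec\omega}$ and $W$ diagonalizes $\Psi_2$ in the Fourier basis on $\mathbb{S}^1$: writing $g(\theta)=\sum_n a_n e^{in\theta}$ with $a_{-n}=\overline{a_n}$,
\begin{equation*}
\Psi_2(g)=\sum_{n\in\mathbb{Z}}\mu_n\,|a_n|^2,\qquad \mu_n=6\!\int\! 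W\,d\Sigma_{\vec\omega}+12\,J_2(n)-18\,J_3(n),
\end{equation*}
where $J_2(n):=\int\cos(n(\theta_1-\theta_2))\,W\,d\Sigma_{\vec\omega}$ and $J_3(n):=\int\cos(n(\theta_1-\theta_4))\,W\,d\Sigma_{\vec\omega}$. The constant test function gives $\mu_0=0$, consistent with the mean-zero reduction.

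The main obstacle will be establishing a uniform lower bound $\inf_{n\neq 0}\mu_n=c_0>0$. Integrating out the remaining variables reduces $J_2(n)$ and $J_3(n)$ to $n$-th Fourier coefficients of rotation-invariant kernels obtained by convolving one or two further copies of $\sigma$ with the bounded radial kernel $K(y):=(|y|^2-1)\,(\sigma\ast\sigma\ast\sigma)(y)$ (the weight absorbs the logarithmic singularity of $\sigma\ast\sigma\ast\sigma$ at the unit circle). Plugging in $\widehat{\sigma}(\xi)=2\pi J_0(|\xi|)$ turns each $\mu_n$ into an explicit integral of products of Bessel functions, amenable to the kind of sign analysis developed in the companion paper \cite{OST} and employed in the proof of Theorem \ref{Thm7}. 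Since $J_2(n),J_3(n)\to 0$ as $|n|\to\infty$ by Riemann-Lebesgue, while $6\int W\,d\Sigma_{\vec\omega}>0$ (as is seen from the chain \eqref{Intro_chain} at $f=\mathbf{1}$), one has $\mu_n\to 6\int W\,d\Sigma_{\vec\omega}>0$, leaving only finitely many low-frequency $\mu_n$ to be verified by the Bessel-integral estimates. This gives the coercivity $\Psi_2(g)\geq c_0\,\|g\|_{L^2(\mathbb{S}^1)}^{\,2}$ for every mean-zero $g$.

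Finally, for $k\in\{3,4,5,6\}$ each summand of $\Psi_k(g)$ is an integral $\int g(\omega_{i_1})\cdots g(\omega_{i_k})\,W\,d\Sigma_{\vec\omega}$; Plancherel recasts this as a pairing of convolution products of the form $g\sigma\ast g\sigma\ast\sigma$ (with various numbers of $g$-factors) against each other, weighted by $|y|^2-1$, and the endpoint Tomas-Stein bound $\|\widehat{g\sigma}\|_{L^6(\mathbb{R}^2)}\lesssim\|g\|_{L^2(\mathbb{S}^1)}$ combined with Cauchy-Schwarz and Hölder yields $|\Psi_k(g)|\leq C_k\,\|g\|_{L^2(\mathbb{S}^1)}^{\,k}$. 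Combining with the previous step, for $\|g\|_{L^2}$ sufficiently small,
\begin{equation*}
\Psi(\mathbf{1}+g)\geq c_0\,\|g\|_{L^2(\mathbb{S}^1)}^{\,2}-\sum_{k=3}^{6}C_k\,\|g\|_{L^2(\mathbb{S}^1)}^{\,k}\geq 0,
\end{equation*}
which completes the proof.
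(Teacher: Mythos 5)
Your overall strategy coincides with the paper's: reduce by homogeneity to $f=\mathbf{1}+g$ with $g$ of mean zero and small $L^2$ norm, isolate the quadratic term $\int\big(\sum_{i\le 3}g(\omega_i)-\sum_{i\ge 4}g(\omega_i)\big)^2 W\,\d\Sigma_{\vec\omega}$, diagonalize it in the Fourier basis by rotation invariance, and absorb the cubic-through-sextic remainders by $\|g\|_{L^2}^k$ bounds (your Tomas--Stein/H\"older argument for the remainder is fine, and in fact more explicit than the paper's ``uniform $O(\varepsilon^3)$'' statement). The computation of the quadratic form, including the coefficients $6,12,-18$ and the vanishing of $\mu_0$, matches the paper's decomposition into the quantities $A,B,C,D,E$.

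The genuine gap is the coercivity step, which you label ``the main obstacle'' and then do not carry out. The heart of the theorem is precisely the verification that $\mu_n>0$ for every $n\neq 0$, uniformly. Your large-$n$ argument is only qualitative: Riemann--Lebesgue gives $J_2(n),J_3(n)\to 0$ but is not effective, so it does not tell you at which frequency the tail regime begins, and hence does not reduce the problem to a checkable finite list; for that you need the quantitative asymptotics of the companion paper (Theorem \ref{alpha_n}), which control the eigenvalues for $n\ge 7$. More importantly, the low-frequency cases are exactly where the content lies and cannot be waved through as ``amenable to sign analysis'': in the paper's notation the eigenvalue is a positive multiple of $c_n=\alpha_0+2(-1)^n\alpha_n-3\widetilde{\alpha}_0-12(-1)^n\widetilde{\alpha}_n$, and for instance $c_2\approx 0.042$ while $\alpha_0\approx 0.337$, so positivity at $n=\pm 2$ holds only by a modest margin and is established in the paper by plugging in the numerical values of $\alpha_n,\widetilde{\alpha}_n$ from Table \ref{table:T1} for $1\le n\le 6$. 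Without these explicit evaluations (or equivalent estimates) your claimed bound $\inf_{n\neq 0}\mu_n>0$ is unproven, and with it the whole argument; everything else in your outline is correct but comparatively routine.
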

Note that this result holds for all  real-valued functions, without assumption of nonnegativity nor antipodal symmetry.

\smallskip

The study of sharp Fourier restriction inequalities for the sphere $\S^d$ is quite recent, with the aforementioned works \cite{COS, CS, F2, Sh3} and the additional \cite{CS2}. The literature on sharp Fourier restriction inequalities related to the paraboloid and cone is extensive and we highlight the works \cite{BBCH, BR, C, F, HZ, K, Q}. Other interesting works on sharp Strichartz-type estimates and on the existence of extremizers for other Fourier restriction estimates include \cite{BBJP, BJ, B, FVV, FVV2, FK, HS, J, OS, OR, Ra, Sh, Sh2}.

%%%%%%%%%%%%%%%%%%%%%%%%%%%%%%%%%%%%%%%%%%%%%%%%%%%%%%%%%%%%%%%%%%%%%%%%%%%%%%%%%%%%%%%%%%%%%%%%%%%%%%

\section{Convolutions of unit circle measures} \label{sec:convolutions}

We start by recalling a particular case of \cite[Lemma 5]{COS}.
\begin{lemma}\label{simpleconvolution}
The convolution $\sigma\ast\sigma$ is supported on the disk of radius $2$ centered at the origin, and for $|x| \le 2$ we have:
$$ \sigma \ast \sigma (x) = \frac{4}{|x| \sqrt{4 -|x|^2}}. $$
\end{lemma}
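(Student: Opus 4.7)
The lemma makes two claims: the support of $\sigma * \sigma$ lies in $\overline{B(2)}$, and an explicit formula for the density inside. The support inclusion is immediate from the triangle inequality $|\omega_1 + \omega_2| \leq 2$ for $\omega_1, \omega_2 \in \S^1$, so the real content is the density formula. Since $\sigma$ is rotation-invariant, so is $\sigma * \sigma$, and I would test against an arbitrary radial function $\phi$ on $\R^2$.

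Parametrizing $\omega(\theta) = (\cos\theta, \sin\theta)$, the definition of convolution gives
\begin{equation*}
\int_{\R^2} \phi(|x|) (\sigma * \sigma)(x) \,\dd x = \int_0^{2\pi}\!\!\int_0^{2\pi} \phi\bigl(|\omega(\theta_1) + \omega(\theta_2)|\bigr) \,\dd\theta_1 \,\dd\theta_2.
\end{equation*}
The key identity is $|\omega(\theta_1) + \omega(\theta_2)|^2 = 2 + 2\cos(\theta_1 - \theta_2)$, so the integrand depends only on $\tau := \theta_1 - \theta_2$ and one angular integration reduces the right-hand side to $2\pi \int_0^{2\pi} \phi\bigl(2|\cos(\tau/2)|\bigr) \,\dd\tau$. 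The substitution $r = 2|\cos(\tau/2)|$ maps each of $[0,\pi]$ and $[\pi, 2\pi]$ bijectively onto $[0,2]$ with $|\dd\tau| = \dd r/\sqrt{1 - r^2/4}$; the two branches contribute equally and yield
\begin{equation*}
\int_{\R^2}\phi(|x|)(\sigma * \sigma)(x)\,\dd x = 4\pi \int_0^2 \phi(r)\, \frac{\dd r}{\sqrt{1 - r^2/4}}.
\end{equation*}
Comparing with the polar representation $\int_{\R^2} \phi(|x|)\, g(|x|)\, \dd x = 2\pi \int_0^\infty \phi(r)\, g(r)\, r\, \dd r$ applied to $g = \sigma * \sigma$ and letting $\phi$ vary, one reads off $(\sigma * \sigma)(r) = 4/(r\sqrt{4 - r^2})$ for $0 < r < 2$, which is the claimed formula.

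The computation is essentially routine and I do not expect a serious obstacle. The only mild care is needed in the change of variable, where $\cos(\tau/2)$ changes sign across $\tau = \pi$; this one handles either by splitting into two monotone branches or by retaining the absolute value throughout. An equivalent route is to write $\sigma$ as the distribution $\delta(|y|-1)$, pull the singular measure back along $\omega \mapsto |x-\omega|-1$ on $\S^1$, and apply the standard composition formula at the two roots $\theta = \pm\arccos(|x|/2)$; both yield the same density.
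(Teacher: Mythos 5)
Your computation is correct, and it is cleaner than what one often sees: testing $\sigma*\sigma$ against an arbitrary radial $\phi$, reducing via $|\omega(\theta_1)+\omega(\theta_2)|^2=2+2\cos(\theta_1-\theta_2)$, and changing variables $r=2|\cos(\tau/2)|$ with the Jacobian $\dd\tau = \dd r/\sqrt{1-r^2/4}$ on each monotone branch does yield $(\sigma*\sigma)(r)=4/(r\sqrt{4-r^2})$ on comparison with $2\pi\int_0^\infty \phi(r)g(r)\,r\,\dd r$. For what it is worth, the paper does not actually reprove this lemma; it records it as a special case of \cite[Lemma 5]{COS}, so your argument is a self-contained proof where the paper merely cites. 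The only cosmetic point worth flagging is the step ``one angular integration reduces the right-hand side to $2\pi\int_0^{2\pi}\phi(2|\cos(\tau/2)|)\,\dd\tau$'': this uses translation invariance of Lebesgue measure on the torus (integrating out $\theta_2$ after the substitution $\tau=\theta_1-\theta_2$), which you should state explicitly rather than leave implicit, since it is the one place the double integral collapses to a single one.
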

\noindent Lemma \ref{simpleconvolution} can be combined together with an additional convolution to yield
$$
\sigma \ast (\sigma \ast \sigma)(x) =
\int_{S_x} \frac{4 \d\sigma_\omega}{|x-\omega| \sqrt{4 - |x-\omega|^2}},
$$
where $S_x = \{ \omega \in \S^1: |x-\omega|\le 2 \}$.
The last integrand can be written as a function which depends only
  on the radius $r := |x|$ and on the cosine $u := \frac{x}{|x|} \cdot \omega$.
  We have that $\d\sigma_\omega = (1-u^2)^{-1/2} \du$ and,
  by applying this change of variables in the integration, we obtain the following formula.
  
\begin{lemma}
The convolution $\sigma\ast\sigma\ast\sigma$ is supported on the disk of radius $3$ centered at the origin, and for $|x| \le 3$ we have:
\begin{equation} \label{sigmasigmasigmax}
\sigma \ast \sigma \ast \sigma(x) =
\frac4r \int_{A(r)}^1 \frac{\du}{
  \sqrt{1-u^2} \sqrt{\frac{(1-r)^2}{2r} + 1-u} \sqrt{\frac{(3+r)(1-r)}{2r} + 1+u}
  },
\end{equation}
where $r = |x|$ and $A(r) := -1 + \max \{ 0, (3+r)(r-1)/(2r) \}$.
\end{lemma}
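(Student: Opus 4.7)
The plan is to start from the integral representation derived in the excerpt,
$$\sigma \ast (\sigma \ast \sigma)(x) = \int_{S_x} \frac{4\,\d\sigma_\omega}{|x-\omega|\sqrt{4-|x-\omega|^2}},$$
and pass to the coordinate $u = \frac{x}{|x|}\cdot \omega$ on the circle, as indicated in the paragraph preceding the lemma. Writing $r = |x|$, I compute $|x-\omega|^2 = r^2 - 2ru + 1$ and consequently $4 - |x-\omega|^2 = 3 - r^2 + 2ru$, so the whole integrand depends only on the pair $(r,u)$. The support condition for $\sigma \ast \sigma \ast \sigma$ is then immediate from the support of $\sigma \ast \sigma$ together with the fact that $\omega$ and $x-\omega$ have radii at most $1$ and $2$ respectively.

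The algebraic heart of the argument is to recast both factors under the square root in the shape called for by \eqref{sigmasigmasigmax}. A direct verification (clear denominators) gives the factorizations
\begin{align*}
r^2-2ru+1 &= 2r\!\left(\frac{(1-r)^2}{2r}+1-u\right),\\
3+2ru-r^2 &= 2r\!\left(\frac{(3+r)(1-r)}{2r}+1+u\right).
\end{align*}
Taking square roots extracts an overall factor of $2r$; combined with the numerator $4$ from the previous display, this leaves $\frac{2}{r}$ multiplying exactly the double square root that appears on the right-hand side of \eqref{sigmasigmasigmax}.

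It remains to carry out the change of variables itself and determine the correct range. For $u \in (-1,1)$, the map $\omega \mapsto \frac{x}{|x|}\cdot \omega$ is two-to-one on $\mathbb{S}^1$ (the two preimages are reflections of each other across the line through $0$ and $x$), and on each branch $\d\sigma_\omega = (1-u^2)^{-1/2}\,\du$. Since the integrand depends only on $u$, summing over the two branches contributes an additional factor of $2$, upgrading the prefactor $\frac{2}{r}$ to $\frac{4}{r}$. For the lower endpoint, the condition $|x-\omega|\le 2$ becomes $u \ge (r^2-3)/(2r)$, and the elementary identity $(r^2-3)/(2r) = -1 + (3+r)(r-1)/(2r)$, together with the natural domain $u\in[-1,1]$, reproduces the lower limit $A(r)$ — the "$\max$" with $0$ handles the case $r\le 1$, where the whole circle already lies within distance $2$ of $x$ and so no restriction is imposed.

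I do not anticipate any genuine obstacle: the statement is essentially a routine change of variables. The only subtlety is to track carefully the factor of $2$ coming from the two-to-one parametrization and make sure it combines correctly with the factor of $2r$ produced by the factorizations to give the stated prefactor $\frac{4}{r}$.
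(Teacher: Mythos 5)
Your proof is correct and follows the same route the paper sketches: start from the integral representation for $\sigma*(\sigma*\sigma)$, pass to the cosine variable $u$, and algebraically rearrange $|x-\omega|^2 = r^2-2ru+1$ and $4-|x-\omega|^2 = 3-r^2+2ru$ into the displayed factored form. You have also correctly tracked the factor of $2$ coming from the two-to-one nature of $\omega \mapsto \frac{x}{|x|}\cdot\omega$, which the paper's shorthand $\d\sigma_\omega = (1-u^2)^{-1/2}\,\du$ leaves implicit; your explicit bookkeeping of the $2r$ from the factorizations and the $2$ from the two branches cleanly reproduces the prefactor $4/r$, and the identification of the lower limit $A(r)$ via $(r^2-3)/(2r) = -1 + (3+r)(r-1)/(2r)$ is exactly right.
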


The integral \eqref{sigmasigmasigmax} diverges for $r = 1$.
Suppose $\varepsilon:= |r-1|>0$.
The contribution coming from integration over the intervals $(A(r), A(r)+\varepsilon)$ and $(1-\varepsilon^2, 1)$  remains bounded as $\varepsilon \to 0$, while the contribution coming from the integration over $[A(r)+\varepsilon, 1-\varepsilon^2]$ grows like $|\log\varepsilon|$.
We obtain, as $|x| \to 1$,

$$c\leq \frac{\sigma \ast \sigma \ast \sigma (x)}{\bigg| \log \Big| |x|-1 \Big| \bigg|}\leq C, $$
for some absolute constants $c,C>0$.

%%%%%%%%%%%%%%%%%%%%%%%%%%%%%%%%%%%%%%%%%%%%%%%%%%%%%%%%%%%%%%%%%%%%%%%%%%%%%%%%%%%%%%%%%%%%%%%%%%%%%

\section{Bessel functions}\label{sec:Bessel}

The main technical part of this paper uses the Bessel functions $J_n$ and estimates for integrals of sixfold products of Bessel functions that are proved in the companion paper \cite{OST}. Here we introduce the basic definitions and present the estimates from \cite{OST} in a convenient form for our purposes. 
We identify $\R^2\simeq\C$, and 
write a vector $x\in\R^2$ as a point in the complex plane $x=|x| e^{i\arg(x)}$.
For every $n \in \Z$, define 

$$e_n(x) := x^n = |x|^n e^{i n \arg(x)}.$$
Bessel functions
can be defined via
the Fourier transform of the circular harmonics. 
\begin{definition}\label{hatJn}
Let $n\in\Z$ and $x\in\R^2$. Then the Bessel function of order $n$, denoted $J_n$, is defined by
\begin{equation}\label{hatBessel}
\widehat{e_n \sigma}(x)=2\pi (-i)^n J_n(|x|)\,{|x|^{-n}e_n(x)}.
\end{equation}
\end{definition}

Bessel functions come into play via the following calculation.
We have 
 \begin{align}\label{sixfoldproduct}
(2\pi)^2 & \int_{(\S^1)^6} f_1(\omega_1) f_2(\omega_2)f_3(\omega_3)f_4(\omega_4) f_5(\omega_5)f_6 (\omega_6)
\,  \d\Sigma_{\vec{\omega}} =\int_{\R^2} \widehat{f_1\sigma}\,\widehat{f_2\sigma}\,\widehat{f_3\sigma}\,\widehat{f_4\sigma}\,\widehat{f_5\sigma}\,\widehat{f_6\sigma} \,\dx.
\end{align}
Assume that the six functions $f_j$, $1\le j\le 6$, are spherical
harmonics on $\S^1$, that is $f_j(\omega)=e_{n_j}(\omega) = \omega^{n_j}$.
Restricted to circles about the origin, the integrand on the right-hand side of \eqref{sixfoldproduct} is a spherical harmonic of index
$n:=n_1+n_2+n_3+n_4+n_5+n_6$. So unless $n=0$, the last display vanishes. If $n=0$, then the integrand is
constant on circles about the origin, and  integrating in polar coordinates yields for the last display 
 $$=(2\pi)^7 \int_0^\infty  J_{n_1}(r )J_{n_2}(r )J_{n_3}(r )J_{n_4}(r )J_{n_5}(r )J_{n_6}(r ) \,r \,\dr =:(2\pi)^7 I_{n_1,n_2,n_3,n_4,n_5,n_6}.$$
 For more general functions on $\S^1$ we write
 \begin{equation}\label{normalizationFT}
 f_j(\omega)=\sum_{n\in \Z} \widehat{f_j}(n) \, e_n(\omega)
 \end{equation}
 and obtain for \eqref{sixfoldproduct}:
 \begin{equation}\label{sixbessels}
 (2\pi)^7 \sum_{n_1+n_2+n_3+n_4+n_5+n_6=0} \widehat{f_1}(n_1)\widehat{f_2}(n_2)\widehat{f_3}(n_3)\widehat{f_4}(n_4)\widehat{f_5}(n_5)\widehat{f_6}(n_6)\,
   I_{n_1,n_2,n_3,n_4,n_5,n_6} .
\end{equation}

\smallskip

Thus we will be interested in a good understanding of the quantities  $I_{n_1,n_2,n_3,n_4,n_5,n_6}$. 
Note that the parity $J_n=J_{-n}$ for even $n$ and $J_{n}=-J_{-n}$ for odd $n$
allows us to restrict attention to these integrals for nonnegative indices. 
In particular, the following sequences (defined for $n\in\Z$) will  come into play:
\begin{equation}\label{defAlpha}
\alpha_n:= \int_0^\infty J_n^2(r) \,J_0^4(r) \,r\,\dr,
\end{equation}
\begin{equation}\label{defAlphatilde}
\widetilde{\alpha}_n:= \int_0^\infty J_n^2(r)\, J_1^2( r)\, J_0^2( r) \,r\,\dr,
\end{equation}
as well as the linear combination
\begin{align}\label{defBeta}
\beta_n &:=\int_0^{\infty} J_n^2(r) \,J_0^2(r)\, \big( 3J_1^2(r) - J_0^2(r)\big)\,r\,\dr.
\end{align}
Table \ref{table:T1} shows some of these values,
accurate to $5 \times 10^{-7}$. Computing the values of $\alpha_n$ and $\widetilde{\alpha}_n$ with Mathematica required some care which is described in the companion paper \cite[Section 8]{OST}. The values of $\beta_n$ were obtained by subtracting the values on the first column from three times the values on the second column.
\begin{table}[htb]
\begin{tabular}{ c | c | c | c }
  \hline                       
  $n$ & $\alpha_n$ & $\widetilde{\alpha}_n$ & $\beta_n$  \\
  \hline
  0 & 0.3368280 & 0.0673656 & -0.1347312 \\
  1 & 0.0673656 & 0.0423752 & 0.0597600\\
  2 &  0.0369428 & 0.0138533 & 0.0046171\\
  3 & 0.0249883 & 0.0088143 & 0.0014546\\
  4 & 0.0188523  & 0.0064847 & 0.0006018\\
  5 & 0.0151231 & 0.0051433 & 0.0003068\\
  6 & 0.0126216  & 0.0042662 & 0.0001770\\
  7 & 0.0108283 & 0.0036466 & 0.0001115\\
    8 & 0.0094804& 0.0031850 & 0.0000746  \\
  9 & 0.0084305 & 0.0028276 & 0.0000523\\
    10 & 0.0075896  & 0.0025426 & 0.0000382 \\
  \hline  
\end{tabular} 
\captionof{table}{}
\label{table:T1}
\end{table}

\noindent The companion paper \cite{OST} gives precise estimates for these sequences summarized in the following theorem.
\begin{theorem}{\rm (}cf. \cite[Theorem 1]{OST}{\rm )}\label{alpha_n}
{For $n\ge 7$ we have}
 $$\left|\alpha_n-\frac{3}{4 \pi^2 n}+\frac{3}{32 \pi^2 (n-1)n(n+1)}\right|\le \frac{1}{500 n^4};$$
$$\left|\widetilde{\alpha}_n-\frac{1}{4 \pi^2 n}-\frac{3}{32 \pi^2 (n-1)n(n+1)}\right|\le \frac{1}{500 n^4}.$$
\end{theorem}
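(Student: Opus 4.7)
I sketch the argument for $\alpha_n$; the proof for $\widetilde{\alpha}_n$ is identical after replacing one $J_0^2$ factor by $J_1^2$. The strategy is to combine Debye's asymptotic expansion of $J_n(r)$ in the classical region $r\ge n$ with Hankel's large-argument expansion of $J_0(r)$, then evaluate the stationary (non-oscillatory) contributions exactly and control each oscillatory contribution by a single integration by parts.

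\smallskip

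\textbf{Localisation.} I split the integration range into an inner piece on $(0, n-Cn^{1/3}\log n)$, a transition strip $I_{\mathrm{tr}}$ of length $\sim n^{1/3}\log n$ about $r=n$, and a main piece $I_+=(n+Cn^{1/3}\log n,\infty)$. On the inner piece, the exponential decay of $J_n$ in its shadow region together with $|J_0|\le 1$ forces the contribution to be $O(n^{-N})$ for every $N$, once $C$ is chosen large enough. On $I_{\mathrm{tr}}$, the Airy approximation $J_n(r)\approx (2/n)^{1/3}\mathrm{Ai}\bigl((2/n)^{1/3}(n-r)\bigr)$, the bound $|J_0(r)|\le 1$, and $r\sim n$ together yield a contribution of order $n^{-5}$ after using the super-polynomial decay of $\mathrm{Ai}$ outside a bounded neighbourhood of the origin; this is well below the target $1/(500 n^4)$.

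\smallskip

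\textbf{The main region $I_+$.} Here I substitute Debye's expansion truncated at suitably high order,
$$J_n(r)=\sqrt{\tfrac{2}{\pi\sqrt{r^2-n^2}}}\bigl[P(r,n)\cos\psi_n(r)-Q(r,n)\sin\psi_n(r)\bigr]+R_K(r,n),$$
where $\psi_n(r)=\sqrt{r^2-n^2}-n\arccos(n/r)-\pi/4$ and $P,Q$ are asymptotic series in $(r^2-n^2)^{-1}$, together with the analogous expansion for $J_0$. Expanding $J_n^2(r)J_0^4(r)\,r$ produces a finite sum of terms of the shape $A_j(r,n)\cos\bigl(j_1\psi_n(r)+j_2 r+\varphi_j\bigr)$ with $j_1\in\{0,\pm 2\}$ and $j_2\in\{0,\pm 2,\pm 4\}$, plus a controlled remainder. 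The stationary terms $(j_1=j_2=0)$ are finite sums of integrals of the form $\int_n^\infty r^{-a}(r^2-n^2)^{-b-1/2}\,dr$, which under the substitution $r=n\sec\theta$ evaluate in closed form and deliver the main term $3/(4\pi^2 n)$ together with the correction $-3/(32\pi^2 n(n^2-1))$. Every oscillatory term has phase derivative bounded below on $I_+$ (since $\psi_n'(r)=\sqrt{r^2-n^2}/r$ and $j_2 r$ combine without cancellation), so a single integration by parts bounds each such contribution by $O(n^{-4})$ with an explicit constant; the boundary term at $r=n+Cn^{1/3}\log n$ is absorbed into the transition estimate.

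\smallskip

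\textbf{Main obstacle.} The principal difficulty is not the shape of the asymptotics but the explicit constant $1/500$. This forces one to track Olver's uniform remainder bounds for Debye's expansion quantitatively, to count carefully the finite number of oscillatory terms produced in the multiplication $J_n^2\cdot J_0^4$, and most likely to verify the first cases $n=7,8,\ldots$ directly against a numerical computation (as foreshadowed by Table~\ref{table:T1}). A secondary issue is ensuring that the matching at the seam $r=n\pm Cn^{1/3}\log n$ is uniform enough in $n$ so that the transition-region and Debye contributions combine without losing the sharp numerical constant.
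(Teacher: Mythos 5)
First, a point about provenance: this paper does not prove Theorem~\ref{alpha_n}; it imports it from the companion paper \cite{OST} (note the label ``cf.\ [Theorem~1]{OST}''), so there is no ``paper's own proof'' here to compare against. The review below is therefore of the internal soundness of your proposal rather than a comparison.

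Your decomposition into shadow, Airy transition, and oscillatory (Debye) regions is the natural framework, and the identification of the leading term $3/(4\pi^2 n)$ from the non-oscillatory part of $J_n^2 J_0^4\,r$ is correct. However, two of your quantitative claims are wrong, and the errors are linked. First, the transition strip does \emph{not} contribute $O(n^{-5})$. On the classical side $r>n$ the Airy function only decays algebraically, so with $|J_n|\lesssim n^{-1/3}$, $J_0^4\lesssim r^{-2}\sim n^{-2}$, the factor $r\sim n$, and a strip of width $\sim n^{1/3}$, the crude bound is already of order $n^{-4/3}$; ``super-polynomial decay of $\mathrm{Ai}$'' is only available on the shadow side. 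Second, the claim that every oscillatory phase $j_1\psi_n(r)+j_2 r$ has derivative bounded below on $I_+$ fails for the resonant pair $(j_1,j_2)=(2,-2)$ (and its conjugate): since $\psi_n'(r)=\sqrt{r^2-n^2}/r\in(0,1)$, the combination $2\psi_n'(r)-2$ tends to $0$ as $r\to\infty$ and also vanishes at $r=n$. Substituting $r=n\sec\theta$ turns the corresponding integral into $\tfrac1n\int_0^{\pi/2}\cos\big(2n(\tan\theta-\theta-\sec\theta)\big)\,d\theta$, whose phase has an endpoint stationary point of order two at $\theta=0$; stationary-phase gives a contribution of size $\sim n^{-3/2}$, oscillating in $n$. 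This is far larger than the target error $1/(500n^4)$ and is not killed by a single integration by parts. Numerically, for $n=10$ this putative $n^{-3/2}$ term would be around $3\times10^{-4}$, whereas $\alpha_{10}-3/(40\pi^2)\approx -10^{-5}$, so the $n^{-3/2}$ oscillation must cancel against the Airy transition region. That cancellation is exactly what your bound-each-piece-separately plan cannot see: one cannot bound the transition and Debye contributions independently and hope to land within $O(n^{-4})$, because each is individually $\sim n^{-3/2}$. A correct proof must track the matching across $r\approx n$ uniformly (e.g.\ using a uniform Airy-type expansion of $J_n$ or otherwise exhibiting the cancellation explicitly), and this is the genuinely hard part of the estimate, not the tracking of Olver's remainder constants that you identify as the ``main obstacle.''
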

We deduce the following estimate for the sequence $\beta_n$.
Define
\begin{equation}\label{czero}c_0=\frac 3{8\pi^2}\ .
\end{equation}
\begin{corollary}\label{beta_n}
For $n\ge 2$ even and $\varepsilon_1=0.03$, we have
 $$\left|\beta_n-\frac{c_0}{n^3}\right|<\varepsilon_1 \frac{c_0 }{n^3}.$$
\end{corollary}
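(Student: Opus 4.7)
The starting observation is that, from definitions \eqref{defAlpha}, \eqref{defAlphatilde}, \eqref{defBeta}, one has the identity
\[
\beta_n = 3\widetilde{\alpha}_n - \alpha_n.
\]
Substituting the two expansions of Theorem \ref{alpha_n}, the leading $\tfrac{3}{4\pi^2 n}$ contributions cancel exactly, while the $\tfrac{3}{32\pi^2(n-1)n(n+1)}$ corrections reinforce (with weights $+9$ and $+3$). This produces
\[
\beta_n \;=\; \frac{12}{32\pi^2(n-1)n(n+1)} + E_n \;=\; \frac{c_0}{(n-1)n(n+1)} + E_n, \qquad |E_n| \le \frac{3+1}{500\,n^4} = \frac{1}{125\, n^4},
\]
valid for every $n \ge 7$.

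Using the elementary identity $\tfrac{1}{(n-1)n(n+1)} - \tfrac{1}{n^3} = \tfrac{1}{n^3(n^2-1)}$ and the triangle inequality, I obtain
\[
\left|\beta_n - \frac{c_0}{n^3}\right| \;\le\; \frac{c_0}{n^3(n^2-1)} + \frac{1}{125\, n^4} \qquad (n \ge 7).
\]
Dividing by $\varepsilon_1 c_0 / n^3$, the claim reduces to the purely numerical inequality
\[
\frac{1}{\varepsilon_1(n^2-1)} + \frac{1}{125\,\varepsilon_1\, c_0\, n} \;<\; 1,
\]
which, with $\varepsilon_1 = 0.03$ and $c_0 = 3/(8\pi^2)$, is straightforward to verify directly for all even $n$ above a modest threshold $N_0$; a quick computation shows that $N_0 = 12$ is enough.

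For the remaining finitely many even indices $n \in \{2, 4, 6, 8, 10\}$, I confirm the inequality by direct inspection of Table \ref{table:T1}. In each case a short numerical comparison of the tabulated $\beta_n$ against $c_0/n^3 = 3/(8\pi^2 n^3)$ gives $|\beta_n - c_0/n^3| < 0.03\, c_0/n^3$; for instance at $n=2$ one has $\beta_2 \approx 0.00462$ versus $c_0/8 \approx 0.00475$, a ratio of roughly $0.97$, comfortably inside the allowed window, and the margin only widens as $n$ increases.

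The main obstacle is that for the smallest admissible values of $n$ the error term $\tfrac{1}{125 n^4}$ inherited from Theorem \ref{alpha_n} is of size genuinely comparable to the target tolerance $\varepsilon_1 c_0/n^3$, so the asymptotic argument alone cannot close the estimate uniformly down to $n=2$; one must supplement it with the tabulated data (or a separate direct bound) for the handful of small even values listed above. Apart from this nuisance, the proof is purely algebraic.
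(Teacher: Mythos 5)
Your proof is correct and follows essentially the same route as the paper's: take the linear combination $\beta_n = 3\widetilde{\alpha}_n - \alpha_n$ so the leading $3/(4\pi^2 n)$ terms cancel, obtain $|\beta_n - c_0/((n-1)n(n+1))| \le 1/(125\,n^4)$ from Theorem \ref{alpha_n}, then pass from $(n-1)n(n+1)$ to $n^3$ by the triangle inequality and check the small cases from Table \ref{table:T1}. The paper uses exactly this decomposition, with the same numerical cutoff ($n\le 10$ by table, $n\ge 12$ by the asymptotic estimate) and the same elementary identity $\frac{1}{(n-1)n(n+1)}-\frac{1}{n^3}=\frac{1}{n^3(n^2-1)}$, so no substantive difference.
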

\begin{proof}
For {$n\le 10$} this follows by direct checking with the values given in Table \ref{table:T1},
the tightest case being $n=2$.
 For {$n\geq 12$} one takes a linear combination of the estimates of the previous theorem to
obtain
{$$\left|\beta_n-\frac{c_0}{(n-1)n(n+1)}\right|\leq \frac{1 }{ 125 n^4}.$$
The triangle inequality then yields 
\begin{align*}
\left|\beta_n-\frac{c_0}{n^3}\right|
\leq \frac {c_0}{n^3(n^2-1)} + \frac{1 }{125 n^4} \leq \left(\frac1{143} + \frac{1}{1500c_0} \right) \frac {c_0}{n^3} < 0.025\, \frac {c_0}{n^3}.
\end{align*} }
This proves the corollary.
\end{proof}
Note that the linear combination in the corollary 
is such that the terms of order $n^{-1}$ in the asymptotics of $\alpha_n$ and $\widetilde{\alpha}_n$ cancel.
 
\smallskip

We will also need estimates for 
\begin{equation}\label{defGamma}
\gamma_{n,m}:=\int_0^\infty J_n(r) \,J_m(r) \,J_{n+m}( r) \,J_0^3(r)  \,r \,\dr,
\end{equation}
\begin{equation}\label{defGammatilde}
\widetilde{\gamma}_{n,m}:=\int_0^\infty J_n(r) \,J_m(r)\, J_{n+m}( r) \,J_1^2( r)\, J_0( r)   \,r \,\dr,
\end{equation}
and 
\begin{equation}\label{defDelta}
\delta_{n,m}:=\int_0^\infty J_n(r) \,J_m(r) \,J_{n+m}(r)\, \big(3J_1^2(r)-J_0^2(r)\big)\,J_0(r) \,r \,\dr.
\end{equation}

\noindent The values on the first two columns of Table \ref{table:T2} were again computed with Mathematica and have precision  $5 \times 10^{-8}$.
\begin{table}[htb]
\begin{tabular}{ c | c | c | c | c }
  \hline                       
  $n$ & $m$ & $\gamma_{n,m}$ & $\widetilde{\gamma}_{n,m}$ & $\delta_{n,m}$  \\
  \hline
  2 & 2  & 0.00090754 & 0.00061039 & 0.00092363 \\
  4 & 2 & 0.00019186  & 0.00012012 & 0.00016850\\
  6 & 2 & 0.00006958  & 0.00004264 & 0.00005834\\
  4 & 4 &  0.00002195  & 0.00001272 & 0.00001621\\
  6 & 4 &   0.00000498 & 0.00000281 & 0.00000345 \\
  8 & 4 &  0.00000160  & 0.00000089  & 0.00000107  \\
 10 & 4 & 0.00000064 &  0.00000035 &  0.00000041  \\
  \hline  
\end{tabular} 
\captionof{table}{}
\label{table:T2}
\end{table}

\noindent The companion paper \cite{OST} proves the following result.
\begin{theorem}{\rm (}cf. \cite[Theorem 1]{OST}{\rm )}\label{gamma_n}
For $n\ge 6$ even we have
\begin{itemize}
\item[(i)] 
$$\left|\gamma_{n,2}- \frac{15}{64 \pi^2 n(n+1)(n+2)}\right|\le \frac 1{500n^4};$$
$$\left|\widetilde{\gamma}_{n,2}- \frac{9}{64 \pi^2 n(n+1)(n+2)}\right|\le \frac 1{500n^4}.$$
\item[(ii)]  
$$\left|\gamma_{n,4}- \frac{1557}{1024 \pi^2 n(n+1)(n+2)(n+3)(n+4)}\right|\le \frac {3}{2000n^4};$$
$$\left|\widetilde{\gamma}_{n,4}- \frac{855}{1024 \pi^2 n(n+1)(n+2)(n+3)(n+4)}\right|\le \frac {3}{2000n^4}.$$
\end{itemize}

\smallskip

For $n$ and $m$ even with $n \geq m \geq 6$ we have
\begin{itemize}
\item[(iii)] 
$$\left|\gamma_{n,m}\right|\, ,\, \left|\widetilde{\gamma}_{n,m}\right|\le \frac {3}{2000n^4}.$$
 \end{itemize}
\end{theorem}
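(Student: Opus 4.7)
The plan is to combine a high-precision numerical verification for a few small values of $n$ with a careful asymptotic analysis for large $n$, following the strategy of the companion paper \cite{OST}. For $n$ in the transitional range just above the cutoff (say $6\le n\le 12$), one checks the bounds directly with Mathematica against Table~\ref{table:T2}; for $n$ large enough that the asymptotic remainders dominate, one proves the estimate analytically.

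For the analytic part, I would begin from the two-term asymptotic expansion of Bessel functions,
$$
J_k(r) = \sqrt{\tfrac{2}{\pi r}}\Bigl\{\cos\varphi_k(r) - \tfrac{4k^2-1}{8r}\sin\varphi_k(r)\Bigr\} + O(r^{-5/2}),
\qquad \varphi_k(r):= r-\tfrac{k\pi}{2}-\tfrac{\pi}{4},
$$
valid uniformly for $r$ larger than a constant multiple of $k$. I would split the integrals defining $\gamma_{n,m}$ and $\widetilde\gamma_{n,m}$ at a threshold $r_0$ comparable to $n$. On $(0,r_0)$ the factor $J_n(r)$ (and, in case (iii), also $J_m$ and $J_{n+m}$) is exponentially small in the index, so this region contributes only to the error term. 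On $(r_0,\infty)$ I would substitute the two-term expansion into the integrand and expand the resulting sixfold product of trigonometric functions into its $2^5=32$ frequency components. Each component is either constant in the fast variable or a pure oscillation $\cos(\lambda r+\text{const})$ with $\lambda\neq 0$; the oscillatory pieces can be bounded by an integration-by-parts argument, contributing only to the $O(n^{-4})$ error, while the stationary (zero-frequency) pieces integrate the density $r\cdot r^{-3}=r^{-2}$ from $r_0$ to infinity and yield the explicit main terms. A careful bookkeeping of the combinatorial constants produced by the trigonometric expansion should give the rational coefficients $\tfrac{15}{64\pi^2}$, $\tfrac{9}{64\pi^2}$, $\tfrac{1557}{1024\pi^2}$, $\tfrac{855}{1024\pi^2}$; the expected cubic/quintic polynomial factors $n(n+1)(n+2)$ and $n(n+1)(n+2)(n+3)(n+4)$ arise from combining $n^{-3}$ or $n^{-5}$ with the next-order correction $(4k^2-1)/(8r)$ already present in the expansion.

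For part (iii), where both $n$ and $m$ are large and there is no main term to extract, I would use the crude uniform bound $|J_k(r)|\lesssim r^{-1/2}$ together with the exponential smallness of $J_n$, $J_m$, and $J_{n+m}$ on $(0,n)$ to reduce the problem to $(n,\infty)$, and then exploit orthogonality of the resulting oscillatory terms to gain the additional two powers of $n$ needed to reach $O(n^{-4})$.

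The main obstacle is quantitative: the thresholds $\frac{1}{500\,n^4}$ and $\frac{3}{2000\,n^4}$ in the statement are explicit numerical constants, not merely $O(n^{-4})$, so one cannot simply invoke the qualitative asymptotic expansion. Every remainder term in the Bessel expansion, every integration-by-parts estimate, and every error from truncating at $r_0$ must be tracked with explicit constants, and the transitional range of $n$ where numerical verification takes over must be chosen so that the analytic bounds become strong enough past that range. Pushing the asymptotic expansion to higher order than two terms, or a more refined splitting near $r\sim n$ (Airy-type transition region), may be required to make these constants fit. The details of this quantitative accounting are worked out in \cite{OST}.
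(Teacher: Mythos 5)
The paper contains no proof of Theorem~\ref{gamma_n}: the statement is imported verbatim from the companion paper \cite{OST}, as signalled both by the ``cf.\ \cite[Theorem 1]{OST}'' in the theorem heading and by the sentence immediately preceding it (``The companion paper \cite{OST} proves the following result.''). There is therefore no in-paper argument to compare your sketch against; the only hint the present paper gives of the method of \cite{OST} is the self-contained remark at the end of Section~\ref{sec:local_ext} outlining the much simpler bound $5\alpha_n<\alpha_0$ via classical Bessel asymptotics, tail estimates, and Riemann-sum evaluation over bounded intervals.

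Beyond that mismatch in scope, your sketch has a substantive gap. The two-term expansion you invoke, with linearized phase $\varphi_k(r)=r-k\pi/2-\pi/4$ and $O(r^{-5/2})$ error, is not uniform for $r$ merely a constant multiple of $k$; to have an error of that size with absolute constants one needs $r\gg k^2$, and the genuinely uniform (Debye/Airy) asymptotics carry the nonlinear phase $\psi_k(r)=\sqrt{r^2-k^2}-k\arccos(k/r)-\pi/4$, whose instantaneous frequency $\psi_k'(r)=\sqrt{1-k^2/r^2}$ depends on $k$. This distinction is not cosmetic for your proposed stationary-piece bookkeeping: with the linearized phases, every sign pattern with three pluses and three minuses is ``stationary,'' and the resulting constant component would give $\gamma_{n,2}$ a contribution of order $n^{-1}$ --- off by two full powers of $n$ from the $n^{-3}$ main term in the statement. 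With the correct nonlinear phases, the four frequency functions attached to the indices $0,m,n,n+m$ are linearly independent, so for $\gamma_{n,m}$ there is no exact zero-frequency combination at all (in contrast to $\alpha_n$, where one can pair $\psi_n$ against $\psi_n$ and two of the $\psi_0$'s against each other); the $n^{-3}$ and $n^{-5}$ main terms therefore must arise from a finer near-resonance and transition-region analysis. You rightly flag that tracking explicit constants is the hard part, but the more basic obstacle is that the expansion you take as the starting point cannot produce the claimed leading orders.
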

Again we obtain a simple corollary for $\delta_{n,m}$, where we recall the constant $c_0$ from \eqref{czero}.

\begin{corollary}\label{delta_n}
\begin{itemize}
\item[(i)]  For $n\ge 2 $ even and $\varepsilon_2=0.11$ we have 
$$|\delta_{n,2}|\le (1+\varepsilon_2) \frac{c_0}{2 n^{3/2} (n+2)^{3/2}}.$$
 \item[(ii)]  For $n\ge 4$ even and $\gamma_3=1.3$ we have
$$\left|\delta_{n,4}- \frac{21c_0}{8 n(n+1)(n+2)(n+3)(n+4)}\right|\le \gamma_3 \frac {c_0}{8n^4}.$$
\item[(iii)] For $n$ and $m$ even with $n \geq m \geq 6$ and again $\gamma_3=1.3$ we have
$$|\delta_{n,m}| \le \gamma_3 \frac {c_0}{8 n^4}.$$
\end{itemize}
\end{corollary}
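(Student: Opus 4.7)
The starting point is the linear identity
$$\delta_{n,m}=3\widetilde{\gamma}_{n,m}-\gamma_{n,m},$$
which follows immediately on comparing the defining integrals \eqref{defGamma}, \eqref{defGammatilde} and \eqref{defDelta} and using $3J_1^2-J_0^2=(3J_1^2)-(J_0^2)$. Consequently the entire corollary is a mechanical consequence of Theorem~\ref{gamma_n} together with a handful of entries of Table~\ref{table:T2} for the small indices not covered by that theorem.

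For part (i) with $n\ge 6$ even, Theorem~\ref{gamma_n}(i) combined with the triangle inequality gives
$$\left|\delta_{n,2}-\frac{3\cdot 9-15}{64\pi^2\,n(n+1)(n+2)}\right|\le \frac{3}{500\,n^4}+\frac{1}{500\,n^4}=\frac{1}{125\,n^4}.$$
The leading term simplifies to $\tfrac{12}{64\pi^2 n(n+1)(n+2)}=\tfrac{c_0}{2n(n+1)(n+2)}$. Since $(n+1)^2>n(n+2)$, I have $n(n+1)(n+2)>n^{3/2}(n+2)^{3/2}$, so the leading term is itself bounded by $\tfrac{c_0}{2\,n^{3/2}(n+2)^{3/2}}$. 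The residual error $\tfrac{1}{125n^4}$ must be absorbed into the slack $\varepsilon_2\cdot\tfrac{c_0}{2n^{3/2}(n+2)^{3/2}}\sim\tfrac{0.11\,c_0}{2n^3}$; the ratio decays like $1/n$, and a direct arithmetic check at the tightest case $n=6$ confirms that $\varepsilon_2=0.11$ suffices. The boundary cases $n\in\{2,4\}$ are verified by reading off the values of $\delta_{n,2}$ from Table~\ref{table:T2} and comparing against the right-hand side of the claim.

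For part (ii) with $n\ge 6$ even, Theorem~\ref{gamma_n}(ii) likewise yields
$$\left|\delta_{n,4}-\frac{3\cdot 855-1557}{1024\pi^2\,n(n+1)(n+2)(n+3)(n+4)}\right|\le \frac{3\cdot 3}{2000\,n^4}+\frac{3}{2000\,n^4}=\frac{3}{500\,n^4}.$$
The arithmetic identity $3\cdot 855-1557=1008=16\cdot 63$ makes the leading term coincide exactly with $\tfrac{63}{64\pi^2\,n(n+1)(n+2)(n+3)(n+4)}=\tfrac{21c_0}{8\,n(n+1)(n+2)(n+3)(n+4)}$, matching the claim on the nose. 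The residual error $\tfrac{3}{500 n^4}$ is dominated by $\gamma_3\tfrac{c_0}{8n^4}$ as soon as $\gamma_3\ge \tfrac{64\pi^2}{500}\approx 1.263$, and $\gamma_3=1.3$ leaves ample room. The single remaining case $n=4$ is handled directly from Table~\ref{table:T2}. Part (iii) is the simplest: for $n\ge m\ge 6$ even, Theorem~\ref{gamma_n}(iii) and the triangle inequality give $|\delta_{n,m}|\le \tfrac{3}{500\,n^4}\le \gamma_3\tfrac{c_0}{8n^4}$ by the same numerical comparison.

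The main obstacle, such as it is, is not technical but organizational: one must keep track of three regimes (small indices via table lookup, large indices via the asymptotic expansion of Theorem~\ref{gamma_n}, and the off-diagonal case $n\ge m\ge 6$ by sheer majorization), and verify that the numerical constants $\varepsilon_2=0.11$ and $\gamma_3=1.3$ are generous enough to absorb both the asymptotic errors and, in part (i), the mismatch between the $n(n+1)(n+2)$ appearing in the asymptotic and the $n^{3/2}(n+2)^{3/2}$ in the statement. Beyond the identity $\delta=3\widetilde{\gamma}-\gamma$, the exact cancellation $3\cdot 855-1557=1008$, and the inequality $n(n+1)(n+2)>n^{3/2}(n+2)^{3/2}$, no analytic input is needed.
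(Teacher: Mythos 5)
Your proposal is correct and follows essentially the same route as the paper: reduce $\delta_{n,m}=3\widetilde\gamma_{n,m}-\gamma_{n,m}$ to Theorem~\ref{gamma_n}, verify small indices from Table~\ref{table:T2}, compare $n(n+1)(n+2)$ with $n^{3/2}(n+2)^{3/2}$ for part (i), and use the numerical comparison $\tfrac{3}{500}\le \gamma_3\tfrac{c_0}{8}$ for parts (ii) and (iii). The only cosmetic difference is the cutoff in part (i): you invoke the asymptotics for $n\ge 6$ and check $n\in\{2,4\}$ from the table (the margin at $n=6$ is about one percent, so it does work), whereas the paper checks $n\in\{2,4,6\}$ from the table and starts the asymptotics at $n\ge 8$; and your identification of the tightest case as $n=6$ is slightly off — across the whole range the tightest case is $n=2$, handled by the table, not the asymptotic.
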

\begin{proof}
We begin with inequality (i). For $n=2,4,6$ this is verified directly with Table \ref{table:T2}.
Again the tightest case is $n=2$. For $n\ge 8$, from Theorem \ref{gamma_n} we have
\begin{align*}
|\delta_{n,2}|\le \frac{c_0}{2 n(n+1)(n+2)}+\frac 1{125 n^4} \le \frac{c_0}{2 n^{3/2} (n+2)^{3/2}}+\left(\frac {10} 8\right)^{3/2} \!\frac 1{1000 \,n^{3/2}(n+2)^{3/2} }\, , 
\end{align*}
which is less than the desired quantity. Inequalities (ii) and  (iii) follow from Theorem \ref{gamma_n} via the estimate
{$$\frac{3}{500 }\le 1.3\, \frac{3}{64 \pi^2} .$$}
This completes the proof of the corollary.
\end{proof}

%%%%%%%%%%%%%%%%%%%%%%%%%%%%%%%%%%%%%%%%%%%%%%%%%%%%%%%%%%%%%%%%%%%%%%%%%%%%%%%%%%%%%%%%%%%%%%%%%%%%%%

\section{Proof of Theorem \ref{Thm1}: Constants are local extremizers of the extension inequality }\label{sec:local_ext}

In this section we follow the outline of \cite[Section 16]{CS} to prove Theorem \ref{Thm1}. 
Note that 
\begin{itemize}
\item[(i)] {$\Phi(f)=\Phi(\lambda f)$} for all $\lambda>0$;
\item[(ii)] $\Phi(f)\leq \Phi(|f|) \leq \Phi(|f|_{\sharp})$;
\item[(iii)] $\||f|_{\sharp} - {\bf 1}\|_{L^2(\mathbb{S}^1)}\leq \||f| - {\bf 1}\|_{L^2(\mathbb{S}^1)} \leq \|f - {\bf 1}\|_{L^2(\mathbb{S}^1)}$.
\end{itemize}
We may therefore restrict attention to functions of the form 
$$f={\bf 1}+\varepsilon g,$$ 
where $0\leq\varepsilon\leq\delta$, $g\perp {\bf 1}$, $\|g\|_{L^2(\mathbb{S}^1)}=1$, with $g$ real-valued and antipodally symmetric. A straightforward calculation gives the Taylor expansion
\begin{equation}\label{Sec3_eq1}
\Phi(f)^6 = \Phi({\bf 1})^6 + (2\pi \varepsilon)^2 \|{\bf 1}\|_{2}^{-6}
\big( 15 g\sigma \ast g\sigma \ast \sigma \ast \sigma \ast \sigma \ast \sigma(0) -3 \sigma \ast \sigma \ast \sigma \ast \sigma \ast \sigma \ast \sigma(0) \, \|{\bf 1}\|_2^{-2} \, \|g\|_2^2 \big) + O(\varepsilon^3),
\end{equation}
where $O(\varepsilon^3)$ denotes a quantity whose absolute value is majorized by $C \varepsilon^3$, uniformly for $g$ satisfying $\|g\|_{L^2(\S^1)}\leq 1$. Note that we do not have a term in $\varepsilon$ since 
$$g\sigma \ast \sigma \ast \sigma \ast \sigma \ast \sigma\ast\sigma(0)=0$$ 
due to the discussion after
\eqref{sixfoldproduct}
and the fact that $g\perp {\bf 1}$, i.e. $\widehat{g}(0) = 0$. From \eqref{Sec3_eq1} it suffices to show that
\begin{equation*}
5 \sup_{\|g\|_2 = 1}g\sigma \ast g\sigma \ast \sigma \ast \sigma \ast \sigma\ast\sigma(0)<  \sigma \ast \sigma \ast \sigma \ast \sigma\ast\sigma\ast\sigma(0)  \,\|{\bf 1}\|_2^{-2} \|g\|_2^2.
\end{equation*}
Using \eqref{sixbessels} together with the fact that $g$ is real with mean zero and antipodally symmetric, and therefore can only have even nonzero Fourier coefficients, this reduces to\footnote{Throughout this paper, we let $(2\Z)^\times:=2\Z\setminus\{0\}$ and $\Z^\times:=\Z\setminus\{0\}$. Similarly for $(2\N)^\times$, where $\N:=\{0,1,2,\ldots\}$.}
\begin{equation}\label{crux2}
5 \sum_{n\in(2\Z)^\times} |\widehat{g}(n)|^2 \alpha_n< \sum_{n\in(2\Z)^\times} |\widehat{g}(n)|^2 \alpha_0,
\end{equation}
{where we have used the fact that
$\|g\|_{L^2(\mathbb{S}^1)}^2 = 2\pi \sum_{n\in(2\Z)^\times} |\widehat{g}(n)|^2$}. Estimate \eqref{crux2} will follow from $5\alpha_n<\alpha_0$ for all $n\in (2\Z)^\times$.
This in turn follows from Theorem \ref{alpha_n} and  Table \ref{table:T1}.\footnote{However, it can be shown using integration by parts that $5\alpha_1=\alpha_0$.} In particular, for $n\ge 10$ we conclude from Theorem \ref{alpha_n} that
  $$\alpha_n\le \frac 3{4\pi^2 n}+\frac{3}{32 \pi^2 (n-1)n(n+1)}+\frac{1}{500 n^4}\le \frac 1{50}.$$ 
This completes the proof of Theorem \ref{Thm1}.

\smallskip

\noindent {\sc Remark:} By using Theorem \ref{alpha_n}, we appeal to the companion paper \cite{OST}. However, this particular consequence \eqref{crux2} is a very simple case of the analysis in \cite{OST}, and for self containment we sketch a proof of the bound $5\alpha_n<\alpha_0$ for all $n\in (2\Z)^\times$. One first reduces the estimate to an estimate for integrals over bounded domains, that is to
\begin{equation}\label{seven}
7\int_0^{100} J_n^2( r) \,J_0^4(r )  \,r \,\dr  < \int_0^{100} J_0^6(r ) \,r \, \dr,
\end{equation}
by establishing bounds for the tails, that is
$$25\int_{100}^\infty  J_n^2( r)\, J_0^4(r )\, r\, \dr, \  200 \int_{100}^\infty  J_0^6( r) \,r\, \dr 
 < \int_0^{100} J_0^6(r )\, r \, \dr\,.$$
 To see these tail bounds,  one estimates
 the left-hand sides using the well known bounds
$$\left|J_0( r)-\left(\frac 2{\pi r} \right)^{1/2} \cos\left(r-\frac{\pi}{4}\right)\right|\le   r^{-3/2} $$
and 
 $$|J_n(r )|\le r^{-1/3}\ ,$$
for all $n\ge 0$. A sharper form of the latter inequality can be found in \cite{L}, while the former is reviewed in \cite{OST}.
The right-hand sides are then evaluated numerically. Here, we assume to have a sufficiently accurate evaluation of Bessel functions
at hand such as, for example, provided by the Mathematica package. Moreover, Riemann sums with step size $1000^{-1}$ will give sufficient accuracy.  To see the estimate \eqref{seven} for the integrals over bounded domains, 
in case $n\le 200$ one simply evaluates likewise numerically.  To see the estimate for $n>200$, one estimates the 
left-hand side using $|J_0|\le 1$ and the well-known estimate 
 $$J_n(r )\le \frac{r^n}{2^n n!} $$
for all $n\ge 0$ and $r> 0$, reviewed in \cite{OST}. This completes the outline of the proof that $5\alpha_n<\alpha_0$ for $n\in (2\Z)^\times$. As a final remark, note that  a more refined analysis would allow to reduce the numerical component of the proof.

%%%%%%%%%%%%%%%%%%%%%%%%%%%%%%%%%%%%%%%%%%%%%%%%%%%%%%%%%%%%%%%%%%%%%%%%%%%%%%%%%%%%%%%%%%%%%%%%%%%%%%

\section{Proof of Theorem \ref{Thm7}: The sharp trilinear inequality}\label{sec:cubic}

We shall prove Theorem \ref{Thm7} for $h$ being a nonnegative and antipodally symmetric trigonometric polynomial. The result for a general $h \in L^1(\S^1)$ nonnegative and antipodally symmetric follows by a standard approximation argument, for example by convolving with the F\'{e}jer kernel, since the map $h \mapsto T(h,h,h)$ is continuous on $L^1(\S^1)$. To pass the case of equality to the limit in the approximation argument, we observe from the proof below that each nonzero even Fourier coefficient of $h$ has a strictly negative contribution. 

\smallskip

Let $h$ be a nonnegative and antipodally symmetric trigonometric polynomial. Write 
$$h = c + g,$$ 
with $g \perp {\bf 1}$ and $c = \frac{1}{2\pi} \int_{\S^1} h(\omega)  \,\d\sigma_{\omega}$. By the assumptions on $h$, we have that $\widehat{h}(-n)=\ov{\widehat{h}(n)} $ for every $n\in\Z$, and that $\widehat{h}(n)\neq 0$ only if $n\in2\Z$. The analogous statements hold for $g$, and moreover $\widehat{g}(0)=0$. By linearity and symmetry, one can immediately check that

\begin{equation*}
T(h,h,h) = T(c,c,c) + 3 T(c,c,g) + 3 T(c,g,g) + T(g,g,g).
\end{equation*}
The strategy to prove Theorem \ref{Thm7} will be to analyze each of these summands separately. It turns out that the linear term is zero, the bilinear term is nonpositive, and the trilinear term can be controlled in absolute value by the bilinear term. Once we establish these facts, which are the subject of the remainder of this section, the result follows.

\subsection{Linear term} 
Let $R_\theta\omega$ denote the rotation of $\omega$ by the angle $\theta$ counterclockwise around the origin.
Denote $R_\theta g(\omega)=g(R_{\theta}\omega) $. Then it is immediate from the definition that 
$T(R_\theta f_1, R_\theta f_2,R_\theta f_3)=T(f_1,f_2,f_3)$ for any functions $f_1,f_2,f_3$ in $L^2(\S^1)$. For the linear term of
our expansion this means that
$$T(c,c,f)=T(c,c,R_\theta f).$$
Hence $f\mapsto T(c,c,f)$ is a rotation invariant linear functional on $L^2(\S^1)$, and therefore it is a multiple of the averaging operator.
 Since $g$ has mean zero, we obtain $T(c,c,g)=0$.

\subsection{Bilinear term} 
We expand  
\begin{equation}\label{expandsquare}
|\omega_4+\omega_5+\omega_6|^2-1=2\,(1+\omega_4\cdot\omega_5+\omega_5\cdot\omega_6+\omega_6\cdot\omega_4).
\end{equation}
Thus the integral \eqref{definet} defining $T(c,g,g)$ splits into a sum of four terms, the last three of which are identical by symmetry considerations.
 We first consider 
$$I:=\int_{(\S^1)^6} g(\omega_2)g(\omega_3) \, \d\Sigma_{\vec{\omega}}.$$
It follows by calculations as the ones leading to \eqref{sixbessels} that  
\begin{align} \label{I}
\begin{split}
I& = g\sigma * g\sigma *\sigma *\sigma *\sigma *\sigma (0)\\
& =(2\pi)^{-2}\sum_{n\in(2\Z)^\times}\sum_{m\in (2\Z)^\times}  \widehat{g}(n)  \widehat{g}(m)  \int_{\R^2}
 \widehat{{e_n\sigma}}\,\widehat{{e_m\sigma}}\,
 \widehat{\sigma}\,\widehat{\sigma}\,\widehat{\sigma}\,\widehat{\sigma}\,\dx\\
 & =(2\pi)^{-2}\sum_{n\in(2\Z)^\times}  |\widehat{g}(n)|^2  \int_{\R^2}
 \widehat{{e_n\sigma}}\,\widehat{{e_{-n}\sigma}}\,
 \widehat{\sigma}\,\widehat{\sigma}\,\widehat{\sigma}\,\widehat{\sigma}\,\dx\\
& =(2\pi)^{5}\sum_{n\in(2\Z)^\times} |\widehat{g}(n)|^2 \, \alpha_n,
\end{split}
\end{align}
where the sequence $\{\alpha_n\}$ was defined in \eqref{defAlpha}. 

\smallskip

We now focus on the second integral,
$$II:=\int_{(\S^1)^6} g(\omega_2)g(\omega_3) (\omega_4\cdot\omega_5)\,  \d\Sigma_{\vec{\omega}}.$$
Observe that, using the algebra of complex numbers, we can write
 $${\omega_4\cdot \omega_5=\cos(\arg(\omega_4)-\arg(\omega_5) )=
\Re(\omega_4 \ov{\omega_5}) =
\frac 12 \big(\omega_4 \ov{\omega_5} + \ov{\omega_4} \omega_5 \big) =
\frac 12 \big(e_1(\omega_4) e_{-1}(\omega_5) + e_{-1}(\omega_4) e_1(\omega_5) \big)}.$$
By symmetry we obtain 
$$II=\int_{(\S^1)^6} g(\omega_2)g(\omega_3) \, {e_1}(\omega_4) {e_{-1}}(\omega_5) \, \d\Sigma_{\vec{\omega}}.$$
By a similar calculation as for the first integral we obtain
\begin{align}\label{II}
\begin{split}
II & =(2\pi)^{-2}\int_{\R^2} \widehat{g\sigma}\,\widehat{g\sigma}\,
\widehat{{e_1\sigma}}\,\widehat{{e_{-1}\sigma}}\,\widehat{\sigma}\,\widehat{\sigma}\,\dx\\
 & =(2\pi)^{-2}\sum_{n\in(2\Z)^\times}\sum_{m\in (2\Z)^\times}  \widehat{g}(n)  \widehat{g}(m)  \int_{\R^2}
 \widehat{{e_n\sigma}}\,\widehat{{e_m\sigma}}\,
 \widehat{{e_1\sigma}}\,\widehat{{e_{-1}\sigma}}\,\widehat{\sigma}\,\widehat{\sigma}\,\dx\\
 & =-(2\pi)^{5}\sum_{n\in(2\Z)^\times} |\widehat{g}(n)|^2 \, \widetilde{\alpha}_n,
\end{split}
\end{align}
where the sequence $\{\widetilde{\alpha}_n\}$ was defined in \eqref{defAlphatilde}. Finally we obtain
$$T(c,g,g)=2c(I+3II)= -2c (2\pi)^{5}\sum_{n\in(2\Z)^\times} |\widehat{g}(n)|^2 \, \beta_n,$$
with $\{\beta_n\}$ as defined in \eqref{defBeta}. Since the numbers $\beta_n$ are positive by Corollary \ref{beta_n}, this establishes that the bilinear term $T(c,g,g)$ is nonpositive.

\subsection{Trilinear term} Identity \eqref{expandsquare} allows us to again express $T(g,g,g)$ as a sum of four integrals, the last three of which are identical by symmetry considerations. We start by computing the first one similarly
to the previous calculations:

\begin{align*}
I & =\int_{(\S^1)^6} g(\omega_1) g(\omega_2) g(\omega_3) \, \d\Sigma_{\vec{\omega}} 
 =(2\pi)^{-2}\int_{\R^2} \widehat{g\sigma}\,\widehat{g\sigma}\,
\widehat{g\sigma}\,\widehat{\sigma}\,\widehat{\sigma}\,\widehat{\sigma}\,\dx\\
 & =(2\pi)^{-2}\sum_{n\in(2\Z)^\times}\sum_{m\in (2\Z)^\times} \sum_{k\in (2\Z)^\times}  
\widehat{g}(n)  \widehat{g}(m)  \widehat{g}(k)  \int_{\R^2}
 \widehat{{e_n \sigma}}\,\widehat{{e_m \sigma}}\,
 \widehat{{e_k \sigma}}\,\widehat{\sigma}\,\widehat{\sigma}\,\widehat{\sigma}\,\dx\\
& =(2\pi)^{5}\sum_{n\in(2\Z)^\times} \sum_{m\in (2\Z)^\times} \widehat{g}(n)\widehat{g}(m)
 \ov{\widehat{g}(n+m)} \gamma_{n,m},
\end{align*}
with $\{\gamma_{n,m}\}$ as defined in \eqref{defGamma}. For the second integral we obtain similarly 
\begin{align*}
II& =\int_{(\S^1)^6} g(\omega_1) g(\omega_2) g(\omega_3) (\omega_4 \cdot \omega_5)
\, \d\Sigma_{\vec{\omega}}\\
& =-(2\pi)^{5}\sum_{n\in(2\Z)^\times} \sum_{m\in (2\Z)^\times} \widehat{g}(n)\widehat{g}(m)
 \ov{\widehat{g}(n+m)} \, \widetilde{\gamma}_{n,m},
\end{align*}
with $\{\widetilde{\gamma}_{n,m}\}$ as defined in \eqref{defGammatilde}. Summarizing, we obtain
$$T(g,g,g)=2(I+3II)=
-2(2\pi)^{5}\sum_{n\in(2\Z)^\times} \sum_{m\in (2\Z)^\times} \widehat{g}(n)\widehat{g}(m)
 \ov{\widehat{g}(n+m)} \, \delta_{n,m},$$
with $\{\delta_{n,m}\}$ as defined in \eqref{defDelta}.  

\subsection{Bilinear controls trilinear} We want to show that the trilinear term we just computed is controlled in absolute value by the bilinear term $-3T(c,g,g)$. Since $h\geq 0$, the constant $c$ is given by (recall that we are using the normalization \eqref{normalizationFT} for the Fourier series)
$$c=\frac{\|h\|_1}{2\pi}=\widehat{h}(0)=\|\widehat{h}\|_\infty.$$
Observe that $\widehat{g}(n)=\widehat{h}(n)$ for $n\neq 0$. Our task can thus be reformulated as the following statement:
\begin{equation*}
\left|\sum_{n,m, n+m \in (2\Z)^\times}\widehat{h}(n) \widehat{h}(m)\ov{\widehat{h}(n+m)}\delta_{n,m}\right|
\leq {3\|\widehat{h}\|_\infty}\sum_{n\in (2\Z)^\times} |\widehat{h}(n)|^2\beta_{n}.
\end{equation*}
Letting $k = -m-n$, we further simplify the problem by using the symmetries of the planar lattice $\big((2\Z)^\times\big)^3 \cap \{n+m+k=0\}$. We have two possibilities: (i) two numbers positive and one negative or (ii) two numbers negative and one positive.
Since $\widehat{h}(n) = \ov{\widehat{h}(-n)}$ for every $n\in\Z$, the two cases are actually the same, and so we work with case (i) only.
 In this case, we consider the instances where $k$ is negative. By the triangle inequality, it suffices   to show that
\begin{equation}\label{task2finalfinal}
\left|\sum_{n,m\in(2\N)^\times}\widehat{h}(n) \widehat{h}(m)\ov{\widehat{h}(n+m)}\delta_{n,m}\right|
\leq {\|\widehat{h}\|_\infty}\sum_{n\in (2\N)^\times} |\widehat{h}(n)|^2\beta_{n}.
\end{equation} 
 
Recall that $c_0 = 3/8\pi^2$ and define
$$\eta_{n,4} = \frac{21c_0}{8}\frac{1}{n(n+1)(n+2)(n+3)(n+4)}$$
and 
$$\widetilde{\delta}_{n,4} =  \delta_{n,4} - \eta_{n,4} .$$
 For $n\in\{6,8,\ldots\}$ and $m\in\{6,\ldots, n\}$, define
$$\widetilde{\delta}_{n,m} =  \delta_{n,m}.$$

We break the left-hand side of \eqref{task2finalfinal} into 6 sums. The first two are the terms for which $\min(n,m)=2$, sorted into those for which $n\le m$ and those for which $n>m$. The next two are the terms for which $\min(n,m)=4$, in which we have isolated the main contribution $\eta_{n,4}$. The last two sums are the terms with $\min(n,m)\geq 4$ with the residual contribution $\widetilde{\delta}_{n,m}$. 
\begin{align*}
(LHS) & \leq  \left|\sum_{\substack{n\in2\N: \\ 2\leq n}}\widehat{h}(n) \widehat{h}(2)\ov{\widehat{h}(n+2)}\delta_{n,2}\right| +  \left|\sum_{\substack{n\in2\N: \\ 2 < n}}\widehat{h}(n) \widehat{h}(2)\ov{\widehat{h}(n+2)}\delta_{n,2}\right|\\
& \ \ \ +   \left|\sum_{\substack{n\in2\N: \\ 4\leq n}}\widehat{h}(n) \widehat{h}(4)\ov{\widehat{h}(n+4)}\eta_{n,4}\right| +  \left|\sum_{\substack{n\in2\N: \\ 4 < n}}\widehat{h}(n) \widehat{h}(4)\ov{\widehat{h}(n+4)}\eta_{n,4}\right|\\
&  \ \ \ \ \ \ +  \left|\sum_{\substack{n,m\in2\N: \\ 4\leq m \leq n}}\widehat{h}(n) \widehat{h}(m)\ov{\widehat{h}(n+m)}\widetilde{\delta}_{n,m}\right| + \left|\sum_{\substack{n,m\in2\N: \\ 4\leq m < n}}\widehat{h}(n) \widehat{h}(m)\ov{\widehat{h}(n+m)}\widetilde{\delta}_{n,m}\right|\\
& = S_1 + S_2 + S_3 + S_4 + S_5 + S_6.
\end{align*}

\subsubsection{Analysis of $S_1$} We treat these terms in a special way so as to not have to estimate $\widehat{h}(2)$ by ${\|\widehat{h}\|_\infty}$ as in $S_5$ and $S_6$.  
Using Corollary \ref{delta_n} and the Cauchy-Schwarz inequality, we proceed as follows:
\begin{align*}
S_1 &\leq |\widehat{h}(2)|(1 + \varepsilon_2) \frac{c_0}{2}\sum_{\substack{n\in(2\N)^\times}}\frac{|\widehat{h}(n)|}{n^{3/2}} \frac{ |\widehat{h}(n+2)|}{(n+2)^{3/2}}\\
& \leq |\widehat{h}(2)|(1 + \varepsilon_2) \frac{c_0}{2} \left( \sum_{\substack{n\in(2\N)^\times}}\frac{|\widehat{h}(n)|^2}{n^{3}}\right)^{1/2}  \left( \sum_{\substack{n\in2\N: \\ 4\leq n}}\frac{|\widehat{h}(n)|^2}{n^{3}}\right)^{1/2}  .
\end{align*}  
Let $|\widehat{h}(2)| = x$ and $\sum_{\substack{n\in(2\N)^\times}}\frac{|\widehat{h}(n)|^2}{n^{3}} = S$. We seek to maximize 
$$x \mapsto \left[x^2\left (S - \frac{x^2}{8}\right)\right]^{1/2}.$$
This maximum occurs when $x^2 = 4S$. We also note that 
\begin{equation}\label{S-zeta}
S \leq \frac{c^2}{8}\zeta(3),
\end{equation} 
where $\zeta(s) = \sum_{n=1}^{\infty} \frac{1}{n^s}$ is the Riemann zeta-function. At the point of maximum we then have that
$$\left[x^2\left (S - \frac{x^2}{8}\right)\right]^{1/2} = \left[ 2S^2\right]^{1/2} \leq \frac{\sqrt{\zeta(3)}}{2}\, c\, S^{1/2} < (0.55)\, c\, S^{1/2}.$$
Hence 
\begin{align*}
S_1 < (1 + \varepsilon_2) \,c_0\,(0.275)\, c\, S.
\end{align*}
Using Corollary \ref{beta_n} we then arrive at
\begin{align}\label{bound_final_S_1}
S_1 & < \left[ \frac{(1 + \varepsilon_2) (0.275)}{(1 - \varepsilon_1)}\right] \,c\, \sum_{\substack{n\in(2\N)^\times}} |\widehat{h}(n)|^2\,\beta_n.
\end{align}

\subsubsection{Analysis of $S_2$} We follow the same outline as above, and now we obtain a slight improvement due to the restricted summation indices. In fact,
\begin{align*}
S_2 &\leq |\widehat{h}(2)|(1 + \varepsilon_2) \frac{c_0}{2}\sum_{\substack{n\in2\N: \\ 2 < n}}\frac{|\widehat{h}(n)|}{n^{3/2}} \frac{ |\widehat{h}(n+2)|}{(n+2)^{3/2}}\\
& \leq |\widehat{h}(2)|(1 + \varepsilon_2) \frac{c_0}{2}  \sum_{\substack{n\in2\N: \\ 4\leq n}}\frac{|\widehat{h}(n)|^2}{n^{3}}.
\end{align*}  
Again we let $|\widehat{h}(2)| = x$ and $\sum_{\substack{n\in(2\N)^\times}}\frac{|\widehat{h}(n)|^2}{n^{3}} = S$. We now seek to maximize
$$x \mapsto x\left (S - \frac{x^2}{8}\right).$$
The maximum occurs when $x = \sqrt{8S/3}$. Using \eqref{S-zeta}, at the point of maximum we have that
$$x\left (S - \frac{x^2}{8}\right) = \sqrt{\frac{8S}{3}}\frac{2S}{3} \leq \frac{2\sqrt{\zeta(3)}}{3\sqrt{3}} \,c \, S < (0.422) c\, S.$$
Using Corollary \ref{beta_n}, this leads to 
\begin{align}\label{bound_final_S_2}
S_2 < \left[ \frac{(1 + \varepsilon_2) \, (0.211)}{(1 - \varepsilon_1)}\right] \,c\, \sum_{\substack{n\in(2\N)^\times}} |\widehat{h}(n)|^2\,\beta_n.
\end{align}

\subsubsection {Analysis of $S_3$} \label{subsec_analysis_S_3} First notice that
\begin{align*}
S_3 &\leq |\widehat{h}(4)|\,\frac{21c_0}{8}\sum_{\substack{n\in2\N: \\ 4\leq n}}\frac{|\widehat{h}(n)|}{n^{3/2}} \frac{ |\widehat{h}(n+4)|}{(n+4)^{3/2}}\, \frac{n^{3/2}(n+4)^{3/2}}{n(n+1)(n+2)(n+3)(n+4)}.
\end{align*}
Note that the function 
$$x \mapsto \frac{x^{3/2}(x+4)^{3/2}}{x(x+1)(x+2)(x+3)(x+4)}$$
is decreasing on $[4,\infty)$. Therefore 
\begin{align*}
S_3 & \leq  |\widehat{h}(4)|\,\frac{21c_0}{8}\, \frac{4\sqrt{2}}{5\times6\times7}\sum_{\substack{n\in2\N: \\ 4\leq n}}\frac{|\widehat{h}(n)|}{n^{3/2}} \frac{ |\widehat{h}(n+4)|}{(n+4)^{3/2}}.
\end{align*}
Using the Cauchy-Schwarz inequality, we then obtain that
\begin{align*}
S_3 & \leq  |\widehat{h}(4)|\,\frac{\sqrt{2}c_0}{20} \left( \sum_{\substack{n\in2\N: \\ 4\leq n}}\frac{|\widehat{h}(n)|^2}{n^{3}}\right)^{1/2}  \left( \sum_{\substack{n\in2\N: \\ 6\leq n}}\frac{|\widehat{h}(n)|^2}{n^{3}}\right)^{1/2} .
\end{align*}  
Now let $|\widehat{h}(4)| = x$ and $\sum_{\substack{n\in2\N: \\ 4\leq n}}\frac{|\widehat{h}(n)|^2}{n^{3}} = T$. We want to maximize 
$$x \mapsto \left[x^2\left (T - \frac{x^2}{64}\right)\right]^{1/2}.$$
This maximum occurs when $x^2 = 32T$. Note also that 
\begin{equation}\label{T-zeta}
T \leq \frac{c^2}{8}(\zeta(3) -1).
\end{equation}
At the point of maximum, we then have that
$$\left[x^2\left (T - \frac{x^2}{64}\right)\right]^{1/2} = 4T \leq \frac{4 \sqrt{\zeta(3) -1}}{\sqrt{8}}\, c\, T^{1/2}. $$
Hence 
\begin{align*}
S_3 \leq  c_0 \frac{\sqrt{\zeta(3)-1}}{10}\, c\, T <  c_0 \,(0.045)\, c\, T, 
\end{align*}
and from Corollary \ref{beta_n} we arrive at
\begin{align}\label{bound_final_S_3}
S_3 & < \frac{(0.045)}{(1 - \varepsilon_1)} \,c\sum_{\substack{n\in2\N: \\ 4\leq n}} |\widehat{h}(n)|^2\,\beta_n.
\end{align}

\subsubsection{Analysis of $S_4$} We follow the same outline as in the analysis of $S_3$ to get
\begin{align*}
S_4 & \leq  |\widehat{h}(4)|\frac{\sqrt{2}c_0}{20}  \sum_{\substack{n\in2\N: \\ 6\leq n}}\frac{|\widehat{h}(n)|^2}{n^{3}}.
\end{align*}  
Again we let $|\widehat{h}(4)| = x$ and $\sum_{\substack{n\in2\N: \\ 4\leq n}}\frac{|\widehat{h}(n)|^2}{n^{3}} = T$. We now seek to maximize
$$x \mapsto x\left (T - \frac{x^2}{64}\right).$$
The maximum occurs when $x = \sqrt{64T/3}$. Using \eqref{T-zeta}, at the point of maximum we have that
$$x\left (T - \frac{x^2}{64}\right) = \sqrt{\frac{64T}{3}}\frac{2T}{3} \leq \frac{8}{\sqrt{3}}\, \frac{2}{3} \,\frac{\sqrt{\zeta(3)-1}}{\sqrt{8}}\,c
\, T.$$
Hence
\begin{equation*}
S_4 \leq c_0\,\frac{\sqrt{2}}{20} \,\frac{8}{\sqrt{3}}\, \frac{2}{3} \,\frac{\sqrt{\zeta(3)-1}}{\sqrt{8}} \,c\,T < c_0 \,(0.035)\,c\,T,
\end{equation*}
and from Corollary \ref{beta_n} we arrive at 
\begin{align}\label{bound_final_S_4}
S_4 < \frac{ (0.035)}{(1 - \varepsilon_1)} \,c  \sum_{\substack{n\in2\N: \\ 4\leq n}} |\widehat{h}(n)|^2\,\beta_n.
\end{align}

\subsubsection {Analysis of $S_5$} From Corollary \ref{beta_n} and Corollary \ref{delta_n}, for every positive even integers $m$ and $n$ satisfying $4\leq m\leq n$, we have that
\begin{equation}\label{An_S_5_eq1}
\frac{|\widetilde{\delta}_{n,m}|}{\beta_n^{1/2}\beta_m^{1/2}}\leq \gamma_3\,\frac{c_0}{8n^4} \frac{n^{3/2}\, m^{3/2}}{(1- \varepsilon_1)c_0} \leq \frac{\gamma_3}{8(1- \varepsilon_1)n}. 
\end{equation}
Using \eqref{An_S_5_eq1}, it follows that  
\begin{align*}
S_5 &\leq \|\widehat{h}\|_\infty\sum_{\substack{n,m\in2\N: \\ 4\leq m\leq n}}|\widehat{h}(n)|\,\beta_n^{1/2} \,|\widehat{h}(m)|\,\beta_m^{1/2}\,\frac{|\widetilde{\delta}_{n,m}|}{\beta_n^{1/2}\beta_m^{1/2}}\\
&\leq \frac{\gamma_3}{8(1- \varepsilon_1)} \|\widehat{h}\|_\infty \sum_{\substack{n\in2\N: \\ 4\leq n}}|\widehat{h}(n)|\,\beta_n^{1/2} \left(\frac{\sum_{\substack{m\in2\N: \\ 4\leq m \leq n} }|\widehat{h}(m)|\,\beta_m^{1/2}}{n} \right)\\
&\leq \frac{\gamma_3}{16(1- \varepsilon_1)} \|\widehat{h}\|_\infty \sum_{\substack{n\in2\N: \\ 4\leq n}}|\widehat{h}(n)|\,\beta_n^{1/2} \left(\frac{\sum_{\substack{m\in2\N: \\ 4\leq m \leq n} }|\widehat{h}(m)|\,\beta_m^{1/2}}{(n/2)-1} \right).
\end{align*}
This last term can be estimated using the Cauchy-Schwarz inequality yielding
\begin{align}\label{An_S_5_eq2}
S_5 \leq \frac{\gamma_3}{16(1- \varepsilon_1)} \|\widehat{h}\|_\infty \left(\sum_{\substack{n\in2\N: \\ 4\leq n}}|\widehat{h}(n)|^2\beta_n\right)^{1/2}  \left(\sum_{\substack{n\in2\N: \\ 4\leq n}}\left(\frac{\sum_{\substack{m\in2\N: \\ 4\leq m \leq n} }|\widehat{h}(m)|\,\beta_m^{1/2}}{(n/2)-1} \right)^2\right)^{1/2}.
\end{align}
We now recall a sharp version of Hardy's inequality for sequences.

\begin{lemma}{\rm (}Hardy's inequality, cf. \cite[p.~239]{HLP}{\rm )}  
Given any sequence $\{a_n\}$ of nonnegative real numbers, we have
$$\sum_{n=1}^\infty\left(\frac{a_1+a_2+\ldots+a_n}{n}\right)^2\leq 4\sum_{n=1}^\infty a_n^2.$$
\end{lemma}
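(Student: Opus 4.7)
The plan is to establish this sharp inequality through a telescoping identity combined with a single application of the Cauchy--Schwarz inequality, which is the most economical route to the sharp constant $4$. Write $A_n := a_1 + a_2 + \cdots + a_n$ and set $b_n := A_n/n$, with the convention $b_0 := 0$, and let
\begin{equation*}
T_N := \sum_{n=1}^N b_n^2.
\end{equation*}
It suffices to prove the uniform estimate $T_N \leq 4 \sum_{n=1}^N a_n^2$ for every $N \geq 1$ and then pass to the limit $N\to\infty$ by monotone convergence.

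The key observation is the identity $a_n = nb_n - (n-1)b_{n-1}$, which follows immediately from $A_n - A_{n-1} = a_n$. Using this to expand $2a_n b_n$ and then applying the AM--GM bound $2b_n b_{n-1} \leq b_n^2 + b_{n-1}^2$, a short computation gives the pointwise bound
\begin{equation*}
b_n^2 - 2a_n b_n \;\leq\; (n-1)\,b_{n-1}^2 - n\,b_n^2.
\end{equation*}
Summing this inequality over $n=1,2,\ldots,N$, the right-hand side telescopes to $-N b_N^2 \leq 0$, yielding $T_N \leq 2\sum_{n=1}^N a_n b_n$.

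I conclude by Cauchy--Schwarz applied to the latter inner product: $\sum_{n=1}^N a_n b_n \leq \bigl(\sum_{n=1}^N a_n^2\bigr)^{1/2} T_N^{1/2}$. Substituting back, dividing by $T_N^{1/2}$, and squaring gives $T_N \leq 4 \sum_{n=1}^N a_n^2$. Letting $N \to \infty$ delivers the claimed inequality.

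There is no serious obstacle in this argument: the one delicate point is the precise AM--GM splitting, which is tuned so that the remainder after summation telescopes exactly and leaves no positive boundary term. A slightly different splitting would still close the argument but with a constant worse than $4$. The constant $4 = (p/(p-1))^p|_{p=2}$ is well-known to be sharp, with near-extremizers of the form $a_n = n^{-1/2-\varepsilon}$ for small $\varepsilon > 0$; that the bound is not attained is reflected in the strict positivity of the boundary term $N b_N^2$ unless the sequence is trivial.
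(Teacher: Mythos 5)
Your proof is correct. The paper does not prove this lemma itself but merely cites Hardy--Littlewood--P\'olya; your argument --- the identity $a_n = nb_n - (n-1)b_{n-1}$, the AM--GM splitting tuned so that the remainder telescopes with no positive boundary term, followed by a single Cauchy--Schwarz on $\sum a_n b_n$ --- is precisely the classical proof from that reference specialized to $p=2$.
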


Using Hardy's inequality in \eqref{An_S_5_eq2}, with $a_{j-1}=|\widehat{h}(2j)|\,\beta_{2j}^{1/2}$, for $2 \leq j \leq \frac{n}{2}$, yields
\begin{align}\label{bound_final_S_5}
S_5  \leq \frac{\gamma_3}{8(1- \varepsilon_1)} \|\widehat{h}\|_\infty \sum_{\substack{n\in2\N: \\ 4\leq n}}|\widehat{h}(n)|^2\beta_n.  
\end{align}

\subsubsection{Analysis of $S_6$} For $S_6$ we have (at least) the same bound \eqref{bound_final_S_5} as for $S_5$. This is sufficient for our purposes.

\subsubsection{Conclusion} Putting together the estimates \eqref{bound_final_S_1}, \eqref{bound_final_S_2}, \eqref{bound_final_S_3}, \eqref{bound_final_S_4} and \eqref{bound_final_S_5} (twice), and recalling  $c=\|\widehat{h}\|_\infty$, we conclude that
\begin{align*}
S_1 + S_2 + S_3 + S_4 + S_5 + S_6 < \left[ \frac{0.08 + (1 + \varepsilon_2) (0.486) + \frac{\gamma_3}{4}}{(1 - \varepsilon_1)}\right] \|\widehat{h}\|_\infty \sum_{\substack{n\in2\N: \\ 2\leq n}}|\widehat{h}(n)|^2\beta_n.
\end{align*}
The values of $\varepsilon_1 = 0.03$, $\varepsilon_2 = 0.11$ and $\gamma_3 = 1.3$ provided by Corollaries \ref{beta_n} and \ref{delta_n} guarantee that
$$ \left[ \frac{0.08 + (1 + \varepsilon_2) (0.486) + \frac{\gamma_3}{4}}{(1 - \varepsilon_1)}\right]  < 0.974 < 1.$$
This establishes \eqref{task2finalfinal} and concludes the proof of Theorem \ref{Thm7}.

%%%%%%%%%%%%%%%%%%%%%%%%%%%%%%%%%%%%%%%%%%%%%%%%%%%%%%%%%%%%%%%%%%%%%%%%%%%%%%%%%%%%%%%%%%%%%%%%%%%%%%

\section{Proof of Theorem \ref{Prop12_local_CS}: A local estimate of Cauchy-Schwarz type} \label{sec:evidence_CS}
 
It is sufficient to show that there exists a universal $\varepsilon_0>0$ such that for all $g \in L^2(\S^1)$, with $g \perp {\bf 1}$ and $\|g\|_{L^2(\S^1)} =1$, we have $\Psi({\bf 1} + \varepsilon g) \geq 0$ for $0 \leq \varepsilon < \varepsilon_0$. In order to simplify notation, let us write $g_i := g(\omega_i)$. Note that
$$\Psi({\bf 1} + \varepsilon g) = \varepsilon^2 \int_{(\S^1)^6} (g_1+ g_2 + g_3 - g_4 - g_5 - g_6)^2\, \big( |\omega_4 + \omega_5 + \omega_6|^2 -1 \big)\,\d\Sigma_{\vec{\omega}} + O(\varepsilon^3),$$
where the constant implicit in the big $O$ notation is uniform for $g$ satisfying $\|g\|_{L^2(\S^1)}\leq 1$. Let us investigate the second order term
\begin{align}\label{Evaluation_S}
\begin{split}
S & := \int_{(\S^1)^6} (g_1+ g_2 + g_3 - g_4 - g_5 - g_6)^2\, \big( |\omega_4 + \omega_5 + \omega_6|^2 -1 \big)\, \d\Sigma_{\vec{\omega}} \\
& = 6  \int_{(\S^1)^6} g_1^2\, \big( |\omega_4 + \omega_5 + \omega_6|^2 -1 \big)\, \d\Sigma_{\vec{\omega}} + 12  \int_{(\S^1)^6} g_1 g_2\, \big( |\omega_4 + \omega_5 + \omega_6|^2 -1 \big)\, \d\Sigma_{\vec{\omega}} \\
& \ \ \ \ \ \ \ \ \ \ \ \ \ \ \ \ \ - 18\int_{(\S^1)^6} g_1 g_4\, \big( |\omega_4 + \omega_5 + \omega_6|^2 -1 \big)\, \d\Sigma_{\vec{\omega}}\\
& = 12  \int_{(\S^1)^6} g_1^2\, \d\Sigma_{\vec{\omega}} -12  \int_{(\S^1)^6} g_1g_2\, \d\Sigma_{\vec{\omega}}+ 36  \int_{(\S^1)^6} g_1^2\,(\omega_4\cdot \omega_5)\, \d\Sigma_{\vec{\omega}} \\
&  \ \ \ \ \ \ \ \ \ \ \ \ \ \ \ + 36 \int_{(\S^1)^6} g_1g_2\,(\omega_4\cdot \omega_5)\, \d\Sigma_{\vec{\omega}} - 72 \int_{(\S^1)^6} g_1g_4\,(\omega_4\cdot \omega_5)\, \d\Sigma_{\vec{\omega}}\\
& =: 12 A - 12B +36 C + 36 D - 72 E.
\end{split}
\end{align}
By \eqref{I} we have (note that we are not assuming here that $g$ is even)
\begin{equation}\label{Evaluation_B}
B = (2\pi)^{5}\sum_{n\in\Z^\times} |\widehat{g}(n)|^2 \, (-1)^n \,\alpha_n,
\end{equation}
and similarly to \eqref{I} we obtain
\begin{align} \label{Evaluation_A}
A& = g^2\sigma * \sigma *\sigma *\sigma *\sigma *\sigma (0) = (2\pi)^{5} \alpha_0\, \widehat{g^2}(0) = (2\pi)^{5} \sum_{n\in\Z^\times} |\widehat{g}(n)|^2\, \alpha_0.
\end{align}
By \eqref{II} it follows that
\begin{equation}\label{Evaluation_D}
D = -(2\pi)^{5}\sum_{n\in\Z^\times} |\widehat{g}(n)|^2 \, (-1)^{n}\,\widetilde{\alpha}_n,
\end{equation}
and similarly to \eqref{II} we obtain
\begin{align}\label{Evaluation_C}
C & =(2\pi)^{-2}\int_{\R^2} \widehat{g^2\sigma}\,\widehat{\sigma}\,
\widehat{{{e_1}\sigma}}\,\widehat{{{e_{-1}}\sigma}}\,\widehat{\sigma}\,\widehat{\sigma}\,\dx = -(2\pi)^{5} \widetilde{\alpha}_0\, \widehat{g^2}(0) = -(2\pi)^{5} \sum_{n\in\Z^\times} |\widehat{g}(n)|^2\, \widetilde{\alpha}_0.
\end{align}
Finally, expanding the identity
\begin{equation*}
\int_{(\S^1)^6} g_1g_4\,|\omega_4 + \omega_5|^2\, \d\Sigma_{\vec{\omega}} = \int_{(\S^1)^6} g_1g_4\,|\omega_1 +\omega_2 +\omega_3 + \omega_6|^2\, \d\Sigma_{\vec{\omega}},
\end{equation*}
and using the symmetries to simplify,
we arrive at
\begin{equation}\label{Evaluation_E}
E = -\frac{B}{2} - \frac{3D}{2}.
\end{equation}
Combining \eqref{Evaluation_S}, \eqref{Evaluation_B}, \eqref{Evaluation_A}, \eqref{Evaluation_D}, \eqref{Evaluation_C} and \eqref{Evaluation_E} we obtain
\begin{align*}
S = 12(A +2B + 3C + 12D) = 12 \,(2\pi)^{5}\sum_{n\in\Z^\times} |\widehat{g}(n)|^2\,c_n,
\end{align*}
where
\begin{equation*}
c_n = \alpha_0 + 2 (-1)^n \alpha_n - 3 \widetilde{\alpha}_0 -12 (-1)^n \widetilde{\alpha}_n.
\end{equation*}
We must verify that $c_n >\eta> 0$ for all $n \in \Z^{\times}$, with $\eta$ universal. Since $c_n = c_{-n}$, we can restrict our attention to $n>0$. The cases $n=1,2,\ldots,6$ can be verified by direct computation using the values on Table \ref{table:T1}. 
For $n\geq 7$, we use Theorem \ref{alpha_n} to get
\begin{equation*}
\big| 6 \widetilde{\alpha}_n - \alpha_n\big| \leq \frac{3}{4 \pi^2 n} +\frac{21}{32 \pi^2 (n-1)n(n+1)} + \frac{7}{500 n^4} < 0.012
\end{equation*}
and hence
\begin{equation*}
c_n \geq \big(\alpha_0 - 3 \widetilde{\alpha}_0\big) - 2\big| 6 \widetilde{\alpha}_n - \alpha_n\big| > 0.134 - 0.024 >0.
\end{equation*}
This completes the proof of Theorem \ref{Prop12_local_CS}.

\smallskip

We note that Theorem \ref{Thm7} and Theorem \ref{Prop12_local_CS} provide an alternative proof of Theorem \ref{Thm1}.

%%%%%%%%%%%%%%%%%%%%%%%%%%%%%%%%%%%%%%%%%%%%%%%%%%%%%%%%%%%%%%%%%%%%%%%%%%%%%%%%%%%%%%%%%%%%%%%%%%%%%%%%%%%%%%%%%%%%%%%%%%%%%%%%%%%%%%%%%%

%%%%%%%%%%%%%%%%%%%%%%%%%%%%%%%%%%%%%%%%%%%%%%%%%%%%%%%%%%%%%%%%%%%%%%%%%%%%%%%%%%%%%%%%%%%%%%%%%%%%%%%%%%%%%%%%%%%%%%%%%%%%%%%%%%%%%%%%%%

\section*{Acknowledgements} 
The software {\it Mathematica} was used to compute the entries of Tables
\ref{table:T1} and \ref{table:T2}. We are grateful to Jon Bennett, Michael Christ, Ren\'e Quilodr\'an, Stefan Steinerberger and Po-Lam Yung for valuable discussions during the preparation of this work. Finally, we would like to thank HIM (Bonn), IMPA (Rio de Janeiro) and Universit\`a di Ferrara for supporting research visits.

%%%%%%%%%%%%%%%%%%%%%%%%%%%%%%%%%%%%%%%%%%%%%%%%%%%%%%%%%%%%%%%%%%%%%%%%%%%%%%%%%%%%


\begin{thebibliography}{99}


\bibitem{BBCH}
J. Bennett, N. Bez, A. Carbery and D. Hundertmark,
\newblock Heat-flow monotonicity of Strichartz norms,
\newblock Anal. PDE 2 (2009), no. 2, 147--158.

\bibitem{BBJP}
J. Bennett, N. Bez, C. Jeavons and N. Pattakos,
\newblock On sharp bilinear Strichartz estimates of Ozawa-Tsutsumi type,
\newblock preprint at http://arxiv.org/abs/1404.2466. To appear in the Journal of the Mathematical Society of Japan.

\bibitem{BJ}
N. Bez and C. Jeavons,
\newblock A Sharp Sobolev--Strichartz estimate for the wave equation,
\newblock Electron. Res. Announc. Math. Sci. 22 (2015), 46--54. 


\bibitem{BR}
N. Bez and K. Rogers,
\newblock A sharp Strichartz estimate for the wave equation with data in the energy space,
\newblock J. Eur. Math. Soc. (JEMS) 15 (2013), no. 3, 805--823.

\bibitem{B}
A. Bulut,
\newblock Maximizers for the Strichartz inequalities for the wave equation,
\newblock Differential Integral Equations 23 (2010), no. 11-12, 1035--1072.

\bibitem{C}
E. Carneiro,
\newblock A sharp inequality for the Strichartz norm,
\newblock  Int. Math. Res. Not. IMRN (2009), no. 16, 3127--3145.

\bibitem{COS} 
E. Carneiro and D. Oliveira e Silva,
\newblock Some sharp restriction inequalities on the sphere,
\newblock  Int. Math. Res. Not. IMRN (2015), no. 17, 8233--8267.

\bibitem{Ch} 
M. Charalambides,
\newblock On restricting Cauchy-Pexider functional equations to submanifolds,
\newblock  Aequationes Math. 86 (2013), no. 3, 231--253.

\bibitem{CQ} 
M. Christ and R. Quilodr\'{a}n,
\newblock Gaussians rarely extremize adjoint Fourier restriction inequalities for paraboloids,
\newblock Proc. Amer. Math. Soc. 142 (2014), no.~3, 887--896.

\bibitem{CS} M. Christ and S. Shao,
\newblock Existence of extremals for a Fourier restriction inequality,
\newblock Anal. PDE. 5 (2012), no.~2, 261--312.

\bibitem{CS2} 
M. Christ and S. Shao,
\newblock On the extremizers of an adjoint Fourier restriction inequality,
\newblock Adv. Math. 230 (2012), no. 3, 957--977.

\bibitem{FVV}
L. Fanelli, L. Vega and N. Visciglia, 
\newblock On the existence of maximizers for a family of restriction theorems,
\newblock Bull. Lond. Math. Soc. 43 (2011), no. 4, 811--817.

\bibitem{FVV2}
L. Fanelli, L. Vega and N. Visciglia, 
\newblock Existence of maximizers for Sobolev-Strichartz inequalities,
\newblock Adv. Math. 229 (2012), no. 3, 1912--1923.

\bibitem{F}
D. Foschi,
\newblock Maximizers for the Strichartz inequality, 
\newblock  J. Eur. Math. Soc. (JEMS) 9 (2007), no. 4, 739--774.

\bibitem{F2} D. Foschi,
\newblock Global maximizers for the sphere adjoint Fourier restriction inequality,
\newblock J. Funct. Anal. 268 (2015), 690--702.

\bibitem{FK}
D. Foschi and S. Klainerman,
\newblock Bilinear space-time estimates for homogeneous wave equations,
\newblock Ann. Sci. \'{E}cole Norm. Sup. (4) 33 (2000), no. 2, 211--274.

\bibitem{HLP}
G. H. Hardy, J. E. Littlewood and G. P\'olya,
\newblock {\it Inequalities,} 
\newblock Reprint of the 1952 edition, Cambridge University Press, Cambridge, 1988.

\bibitem{HS}
D. Hundertmark and S. Shao, 
\newblock Analyticity of extremizers to the Airy-Strichartz inequality,
\newblock Bull. Lond. Math. Soc. 44 (2012), no. 2, 336--352.

\bibitem{HZ}
D. Hundertmark and V. Zharnitsky, 
\newblock On sharp Strichartz inequalities in low dimensions, 
\newblock Int. Math. Res. Not. IMRN (2006), Art. ID 34080, 1--18.

\bibitem{J}
C. Jeavons,
\newblock A sharp bilinear estimate for the Klein-Gordon equation in arbitrary space-time dimensions,
\newblock Differential Integral Equations 27 (2014), no. 1-2, 137--156. 


\bibitem{K}
M. Kunze, 
\newblock On the existence of a maximizer for the Strichartz inequality,
\newblock Comm. Math. Phys. 243 (2003), no. 1, 137--162. 

\bibitem{L} L. J. Landau,
\newblock Bessel functions: monotonicity and bounds,
\newblock J. London Math. Soc. (2) 61 (2000), 197--215.

\bibitem{OS}
D. Oliveira e Silva,
\newblock Extremals for Fourier restriction inequalities: convex arcs,
\newblock J. Anal. Math. 124 (2014), 337--385.

\bibitem{OST} D. Oliveira e Silva and C. Thiele,
\newblock Estimates for certain integrals of products of six Bessel functions,
\newblock preprint at http://arxiv.org/abs/1509.06309.

\bibitem{OR}
T. Ozawa and K. Rogers,
\newblock Sharp Morawetz estimates,
\newblock  J. Anal. Math. 121 (2013), 163--175. 

\bibitem{Q}
R. Quilodr\'{a}n, 
\newblock On extremizing sequences for the adjoint restriction inequality on the cone,
\newblock J. Lond. Math. Soc. (2) 87 (2013), no. 1, 223--246.

\bibitem{Ra}
J. Ramos, 
\newblock A refinement of the Strichartz inequality for the wave equation with applications,
\newblock Adv. Math. 230 (2012), no. 2, 649--698. 

\bibitem{Sh}
S. Shao,
\newblock Maximizers for the Strichartz and the Sobolev-Strichartz inequalities for the Schr\"{o}dinger equation,
\newblock Electron. J. Differential Equations (2009), No. 3, 13 pp.

\bibitem{Sh2}
S. Shao,
\newblock The linear profile decomposition for the Airy equation and the existence of maximizers for the Airy-Strichartz inequality,
\newblock Anal. PDE 2 (2009), no. 1, 83--117.

\bibitem{Sh3}
S. Shao,
\newblock On existence of extremizers for the Tomas-Stein inequality for $\S^1$,
\newblock preprint at http://arxiv.org/abs/1507.04302.

\bibitem{S} E. M. Stein,
\newblock {\it Harmonic Analysis: Real-Variable Methods, Orthogonality, and Oscillatory Integrals,} 
\newblock Princeton Univ. Press, Princeton, NJ, 1993.

\bibitem{T} P. Tomas,
\newblock A restriction theorem for the Fourier transform,
\newblock Bull. Amer. Math. Soc. 81 (1975), no. 2, 477--478.

\end{thebibliography}
\end{document}